\documentclass{article}

\newif\ifjournalversion
\journalversionfalse   

\ifjournalversion
    \newcommand{\onlypreprint}[1]{} 
    \newcommand{\onlyjournal}[1]{#1}
\else
    \newcommand{\onlypreprint}[1]{#1} 
    \newcommand{\onlyjournal}[1]{} 
\fi

\usepackage{tikz}
\usepackage{fullpage} 
\usepackage[noadjust]{cite}
\usepackage{multicol}
\usepackage{cite}
\usepackage{stmaryrd}
\usepackage{amsmath}
\usepackage{amsfonts} 
\usepackage{amsthm}
\usepackage{pgfplots}
\usepackage[hidelinks]{hyperref}

\usepackage{mathtools}
\usepackage[normalem]{ulem}

\newtheorem{theorem}{Theorem}[section]

\newtheorem{lemma}[theorem]{Lemma}

\newtheorem{claim}[theorem]{Claim}
\newtheorem{problem}{Problem}

\newtheorem{question}{Question}


\newlength{\bibitemsep}\setlength{\bibitemsep}{0.1\baselineskip plus .05\baselineskip minus .05\baselineskip}
\newlength{\bibparskip}\setlength{\bibparskip}{0pt}
\let\oldthebibliography\thebibliography
\renewcommand\thebibliography[1]{%
  \oldthebibliography{#1}%
  \setlength{\parskip}{\bibitemsep}%
  \setlength{\itemsep}{\bibparskip}%
}


\usetikzlibrary { decorations.pathmorphing, decorations.pathreplacing, decorations.shapes, }

\tikzset{vtx/.style={inner sep=1.7pt, outer sep=0pt, circle, fill}}

\newcommand{\vc}[1]{\ensuremath{\vcenter{\hbox{#1}}}}
\tikzset{flag_pic/.style={scale=1}}  
\tikzset{unlabeled_vertex/.style={inner sep=1.7pt, outer sep=0pt, circle, fill}}
\tikzset{labeled_vertex/.style={inner sep=3pt, outer sep=0pt, rectangle, fill=white, draw=black}}
\tikzset{edge_color0/.style={color=black,line width=1.2pt,opacity=0.5,dashed}}
\tikzset{edge_color1/.style={color=red,  line width=1.2pt,opacity=0}} 
\tikzset{edge_color2/.style={color=blue, line width=1.2pt,opacity=1}}
\tikzset{edge_color3/.style={color=green!80!black,line width=1.5pt,densely dotted}}
\tikzset{edge_color4/.style={color=red, line width=1pt,decorate,decoration={zigzag,segment length=1mm,amplitude=0.2mm},line join=bevel}}
\tikzset{edge_color5/.style={color=magenta,  line width=1.4pt,dash pattern = {on 4pt off 2pt on 1pt off 2pt}}}
\tikzset{edge_color6/.style={color=gray, line width=1.4pt,densely dashed}}
\tikzset{edge_color7/.style={color=cyan, line width=1.4pt,dash pattern = {on 2pt off 3pt}}}
\tikzset{edge_color8/.style={color=gray, line width=1.2pt}}
\tikzset{edge_color9/.style={color=gray, dotted, line width=1.2pt}}
\tikzset{edge_color10/.style={color=gray, dashed, line width=1.2pt}}
\tikzset{edge_color11/.style={color=pink, dashed, line width=1.2pt}}
\tikzset{edge_colorroot/.style={color=red, line width=1.7pt}}
\tikzset{edge_thin/.style={color=black}}
\tikzset{edge_hidden/.style={color=black,dotted,opacity=0}}
\tikzset{vertex_color0/.style={inner sep=1.7pt, outer sep=0pt, draw, circle, fill=black}}
\tikzset{vertex_color1/.style={inner sep=1.7pt, outer sep=0pt, draw, circle, fill=red}}
\tikzset{vertex_color2/.style={inner sep=1.7pt, outer sep=0pt, draw, circle, fill=blue}}
\tikzset{vertex_color3/.style={inner sep=1.7pt, outer sep=0pt, draw, circle, fill=green!80!black}}
\tikzset{vertex_color4/.style={inner sep=1.7pt, outer sep=0pt, draw, circle, fill=pink}}
\tikzset{vertex_color5/.style={inner sep=1.7pt, outer sep=0pt, draw, circle, fill=gray,label=below:{$5$}}}
\tikzset{vertex_color6/.style={inner sep=1.7pt, outer sep=0pt, draw, circle, fill=gray,label=below:{$6$}}}
\tikzset{vertex_color7/.style={inner sep=1.7pt, outer sep=0pt, draw, circle, fill=gray,label=below:{$7$}}}
\tikzset{vertex_color8/.style={inner sep=1.7pt, outer sep=0pt, draw, circle, fill=gray,label=below:{$8$}}}
\tikzset{vertex_color9/.style={inner sep=1.7pt, outer sep=0pt, draw, circle, fill=gray,label=below:{$9$}}}
\tikzset{vertex_color10/.style={inner sep=1.7pt, outer sep=0pt, draw, circle, fill=gray,label=below:{$10$}}}
\tikzset{vertex_color11/.style={inner sep=1.7pt, outer sep=0pt, draw, circle, fill=gray,label=below:{$11$}}}
\tikzset{vertex_color12/.style={inner sep=1.7pt, outer sep=0pt, draw, circle, fill=gray,label=below:{$12$}}}
\tikzset{vertex_color13/.style={inner sep=1.7pt, outer sep=0pt, draw, circle, fill=gray,label=below:{$13$}}}
\tikzset{vertex_color14/.style={inner sep=1.7pt, outer sep=0pt, draw, circle, fill=gray,label=below:{$14$}}}
\tikzset{labeled_vertex_color0/.style={inner sep=2.2pt, outer sep=0pt, draw, rectangle, fill=black}}
\tikzset{labeled_vertex_color1/.style={inner sep=2.2pt, outer sep=0pt, draw, rectangle, fill=red}}
\tikzset{labeled_vertex_color2/.style={inner sep=2.2pt, outer sep=0pt, draw, rectangle, fill=blue}}
\tikzset{labeled_vertex_color3/.style={inner sep=2.2pt, outer sep=0pt, draw, rectangle, fill=green}}
\tikzset{labeled_vertex_color4/.style={inner sep=2.2pt, outer sep=0pt, draw, rectangle, fill=pink}}
\tikzset{labeled_vertex_color5/.style={inner sep=2.2pt, outer sep=0pt, draw, rectangle, fill=gray,label=below:{$5$}}}
\tikzset{labeled_vertex_color6/.style={inner sep=2.2pt, outer sep=0pt, draw, rectangle, fill=gray,label=below:{$6$}}}
\tikzset{labeled_vertex_color7/.style={inner sep=2.2pt, outer sep=0pt, draw, rectangle, fill=gray,label=below:{$7$}}}
\tikzset{labeled_vertex_color8/.style={inner sep=2.2pt, outer sep=0pt, draw, rectangle, fill=gray,label=below:{$8$}}}
\tikzset{labeled_vertex_color9/.style={inner sep=2.2pt, outer sep=0pt, draw, rectangle, fill=gray,label=below:{$9$}}}
\tikzset{text_color0/.style={color=black}}
\tikzset{text_color1/.style={color=red}}
\tikzset{text_color2/.style={color=blue}}
\tikzset{text_color3/.style={color=green!70!black}}
\tikzset{text_color4/.style={color=orange}}
\tikzset{text_color5/.style={color=gray}}

\def\outercycle#1#2{
\pgfmathtruncatemacro{\plusone}{#1+1}
\pgfmathtruncatemacro{\zeroshift}{270 - (#2-1)*360/#1/2 }
    \draw  \foreach \x in {0,1,...,#1}{(\zeroshift+\x*360/#1:1) coordinate(x\x)};
}

\def\labelvertex#1{\pgfmathtruncatemacro{\vertexlabel}{#1+1 } \draw (x#1) node{\color{black}\tiny\vertexlabel}; }



\usepackage{ifthen}

\tikzset{vertex_u/.style={unlabeled_vertex}}
\tikzset{vertex_l/.style={labeled_vertex}}

\newcommand{\Fuu}[1]{
\,\vc{\begin{tikzpicture}[scale=0.3]\outercycle{2}{1}
\draw[edge_color#1] (x0)--(x1);
\draw (x0) node[unlabeled_vertex]{};\draw (x1) node[unlabeled_vertex]{};
\end{tikzpicture}}
\,
}

\newcommand{\Fll}[1]{
\,\vc{\begin{tikzpicture}[scale=0.3]\outercycle{2}{1}
\draw[edge_color#1] (x0)--(x1);
\draw (x0) node[labeled_vertex]{};\draw (x1) node[labeled_vertex]{};
\labelvertex0
\labelvertex1
\end{tikzpicture}}
\,
}

\newcommand{\Fllu}[3]{
\vc{\begin{tikzpicture}[scale=0.4]\outercycle{3}{2}
\draw[edge_color#1] (x0)--(x1);\draw[edge_color#2] (x0)--(x2);  \draw[edge_color#3] (x1)--(x2);
\draw (x0) node[labeled_vertex]{};\draw (x1) node[labeled_vertex]{};\draw (x2) node[unlabeled_vertex]{};
\labelvertex0
\labelvertex1
\end{tikzpicture}}}

\newcommand{\FfourEdges}[6]{
\draw[edge_color#1] (x0)--(x1);\draw[edge_color#2] (x0)--(x2);\draw[edge_color#3] (x0)--(x3);  \draw[edge_color#4] (x1)--(x2);\draw[edge_color#5] (x1)--(x3);  \draw[edge_color#6] (x2)--(x3);
}
\newcommand{\Ffour}[5]{
\vc{\begin{tikzpicture}[scale=0.4]\outercycle{4}{2}
\FfourEdges#5
\draw (x0) node[vertex_#1]{};\draw (x1) node[vertex_#2]{};\draw (x2) node[vertex_#3]{};\draw (x3) node[vertex_#4]{};
\ifthenelse{\equal{#1}{l}}{\labelvertex{0}}{}%
\ifthenelse{\equal{#2}{l}}{\labelvertex{1}}{}%
\ifthenelse{\equal{#3}{l}}{\labelvertex{2}}{}%
\ifthenelse{\equal{#4}{l}}{\labelvertex{3}}{}%
\end{tikzpicture}}
}

\newcommand{\Fuuuu}[6]{\Ffour{u}{u}{u}{u}{#1#2#3#4#5#6}}

\newcommand{\Flluu}[6]{\Ffour{l}{l}{u}{u}{#1#2#3#4#5#6}}

%
\usepackage{listofitems}
\newcommand{\Fnv}[2]{ 
\ifnum#2<#1 \draw (x#2) node[vertex_l]{}; \labelvertex{#2}  
\else  \draw (x#2) node[vertex_u]{}; \fi 
}
\newcommand{\Fne}[3]{
\draw[edge_color#3] (x#1)--(x#2); 
}
\newcounter{Fneid} 
\newcommand{\Fn}[3]{
\ifnum#1=1
  \vc{\begin{tikzpicture}[scale=0.4]\outercycle{1}{2}
  \Fnv{#2}{0}
  \end{tikzpicture}}
\else
\setsepchar{ }
\readlist\elabel{#3}
\pgfmathtruncatemacro{\vertexloop}{#1-1}
\pgfmathtruncatemacro{\vertexloopi}{#1-2}
\pgfmathtruncatemacro{\expectededges}{#1*(#1-1)/2}
\ifnum\elabellen=\expectededges%
\def\cycleshift{2}
\ifnum#1=2\def\cycleshift{1}\fi
\ifnum#1=3\ifnum#2=1\def\cycleshift{1}\fi\fi
\vc{\begin{tikzpicture}[scale=0.4]
          \outercycle{#1}{\cycleshift}
          \setcounter{Fneid}{1}         
          \foreach\i in {0,...,\vertexloopi}{   
          \pgfmathtruncatemacro{\jfrom}{\i+1}
          \foreach\j in {\jfrom,...,\vertexloop}{
          \edef\eID{\arabic{Fneid}}
          \edef\eij{\elabel[\eID]}         
            \Fne\i\j{\eij}              
	    \stepcounter{Fneid}
          }}
          \foreach\i in {0,...,\vertexloop}{\Fnv{#2}{\i}  
          }
          \end{tikzpicture}}%
\else
   #1 vertices need \expectededges{} edges but got \elabellen edges.
\fi
\fi
}

\title{Density of rainbow triangles and properly colored $K_4$'s}
\author{
J\'ozsef Balogh\thanks{Department of Mathematics, University of Illinois Urbana-Champaign, Urbana, IL, USA, and Extremal Combinatorics and Probability Group (ECOPRO), Institute for Basic Science (IBS), Daejeon, South Korea. Email: \texttt{jobal@illinois.edu}. Partially supported by NSF grants RTG DMS-1937241, FRG DMS-2152488, (UIUC  Campus Research Board Award RB26026), the Simons Fellowship, Simons Collaboration grant, and the Institute for Basic Science (IBS-R029-C4).
} 
\and
Peter Bradshaw \thanks{Department of Mathematics, University of Illinois Urbana-Champaign, Urbana, IL, USA. Email: \texttt{pb38@illinois.edu}. Partially supported by NSF grant RTG DMS-1937241 and an AMS-Simons Travel Grant [SFI-MPS-TSM-00013107, JB].}
\and 
Ramon I. Garcia\thanks{Department of Mathematics, University of Illinois Urbana-Champaign, Urbana, IL, USA. Email: \texttt{rig2@illinois.edu}. Partially supported by NSF grant RTG DMS-1937241 and the R.H. Schark Fellowship.
}
\and
Bernard Lidick\'{y}\thanks{Department of Mathematics, Iowa State University, Ames, IA. E-mail: \texttt{lidicky@iastate.edu}. Research of this author is supported in part by NSF grant FRG DMS-2152490, Simons Collaboration grant and a Scott Hanna professorship.}
}
\date{\today}

\begin{document}

\maketitle

\begin{abstract}
We establish a sharp upper bound on the number of properly $3$-edge-colored $K_4$'s in graphs with $R$ red, $G$ green and $B$ blue edges. We give a computer-free flag-algebra proof of this bound, and we also convert our proof into a classical counting proof and an entropy proof.

Additionally, for every $k\ge 4$, for a fixed rainbow coloring $F$ of a complete graph $K_k$, we give a sharp upper bound on the number copies of $F$ in a $\binom{k}{2}$-edge-colored graph. Our proof of this result relies on a new flag-algebra version of H\"older's inequality. 

We also give a computer-free flag-algebra proof of the fact that a graph with $R$ red, $G$ green, and $B$ blue edges has at most $\sqrt{2 RGB}$ rainbow triangles, which was originally proven by
T.-W. Chao and H.-H. H. Yu using the entropy method. We also provide an even shorter entropy proof of their result.
\end{abstract}

\section{Introduction}

The following is a classical question in graph theory: 
Given graphs $H_1$ and $H_2$,
if $G$ has a fixed number of copies of $H_1$, what is the maximum number of copies of $H_2$ that $G$ can have?
The earliest instance of this question is Mantel's theorem \cite{Mantel}, 
which 
states that a triangle-free graph
on $n$ vertices contains at most $\frac 14n^2$ edges.
Tur\'an's theorem \cite{Turan} from 1941
similarly determines  the maximum number of edges in a $K_r$-free graph on $n$ vertices.
Zykov's theorem \cite{Zykov} from 1949 
generalizes Tur\'an's theorem, 
showing that Tur\'an's extremal construction also maximizes the number of copies of $K_s$ 
in a $K_r$-free graph for each $s < r$.

In 1972, a problem of Erd\H os and S\'os appeared in~\cite{Erdos1972} asking for
the maximum number of rainbow triangles in a $3$-edge-colored graph on $n$ vertices. They conjectured that the extremal construction is given by the iterated blowup of a properly colored $K_4$, which contains $(\frac 1{15} + o(1)) n^3$ rainbow triangles (see  Figure~\ref{fig:iter}(b)).
\begin{figure}
\def\e{1.5}
\def\x{13}
\def\k{
\draw
(-1.5,1.5) node[circle,draw, inner sep=\x](x1) {}
(1.5,1.5) node[circle,draw, inner sep=\x](x2) {}
(1.5,-1.5) node[circle,draw, inner sep=\x](x3) {}
(-1.5,-1.5) node[circle,draw,, inner sep=\x](x4) {}
;
\draw[green!80!black,line width=\x pt](x1)--(x2)(x3)--(x4);
\draw[red,line width= \x pt](x1)--(x3)(x2)--(x4);
\draw[blue,line width= \x pt](x1)--(x4)(x2)--(x3);
}
\begin{center}
\begin{tikzpicture}
\def\x{13}
\begin{scope}[scale=0.75]
\k
\end{scope}
\draw (0,-2) node{(a)};
\end{tikzpicture}
\hskip 3em
\begin{tikzpicture}
\def\x{13}
\begin{scope}[scale=0.75]
\k
\end{scope}
\foreach \a in {-1.5*0.75,1.5*0.75}{
\foreach \b in {-1.5*0.75,1.5*0.75}{
\def\x{4}
\begin{scope}[xshift=\a cm,yshift=\b cm,scale=0.2] \k \end{scope}
\foreach \c in {-0.3,0.3}{
\foreach \d in {-0.3,0.3}{
\begin{scope}[xshift= \a cm +\c cm,yshift=\b cm + \d cm,scale=0.06]\def\x{1.2} \k \end{scope}
}}
}
}
\draw (0,-2) node{(b)};
\end{tikzpicture}
\end{center}
\caption{(a) A blowup of a properly $3$-edge-colored $K_4$. (b) An iterated blowup of a properly $3$-edge-colored~$K_4$.}\label{fig:iter}
\end{figure}
This conjecture was solved in~\cite{BaloghRainbow2017}
for  $n$ that are powers of $4$ or sufficiently large.
Their proof used flag algebras and a stability method.


Recently, Chao and Hans Yu~\cite{ChaoEntropy2024,ChaoHungEntropic} used the method of entropy to establish another sharp upper bound on the number rainbow triangles. Unlike the result above, which is expressed in terms of the number of vertices, their upper bound  is
a function of the number of red, green and blue edges.

\begin{theorem}[Chao and Hans Yu~\cite{ChaoEntropy2024}]\label{triangleent}
    Let $\Gamma=(V,E)$ be a simple graph, and color the edges of $\Gamma$  with red, green, and blue. Denote $R$, $G$, $B$  the number of red, green, blue edges, respectively,  and $T$  the number of rainbow triangles in $\Gamma$. Then, $T^2\leq 2RGB$. 
\end{theorem}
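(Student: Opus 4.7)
The plan is to prove $T^2 \leq 2RGB$ by the entropy method. Let $S$ denote the set of ordered triples $(x,y,z)$ for which $xy$ is red, $yz$ is green, and $xz$ is blue; each rainbow triangle contributes exactly one such canonical triple, so $|S| = T$. Let $(X,Y,Z)$ be a uniformly random element of $S$, so that $H(X,Y,Z) = \log T$. The three pairwise marginals are supported on ordered edges of a single color, whence $H(X,Y) \leq \log(2R)$, $H(Y,Z) \leq \log(2G)$, and $H(X,Z) \leq \log(2B)$.

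Shearer's inequality (equivalently, Han for three variables, or Loomis--Whitney) immediately gives
\[
2\log T = 2H(X,Y,Z) \leq H(X,Y) + H(Y,Z) + H(X,Z) \leq \log(8RGB),
\]
so $T^2 \leq 8RGB$. The main task is to close the factor-of-$4$ gap to the sharp constant $2$.

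My proposal for saving this factor is to work with a cleverly symmetrized probability space. The obvious attempt is to enlarge $S$ by adjoining a mirror copy of each triangle, doubling the support to $2T$ while keeping each pairwise marginal within ordered edges of a single color; a direct doubling (say using both cyclic rotations of the red--green--blue pattern) fails because it mixes colors in the marginals, so one needs a more subtle encoding. A natural alternative is a Cauchy--Schwarz reduction: decomposing $T = \sum_{e\text{ red}} d(e)$ and applying Cauchy--Schwarz yields $T^2 \leq R \cdot \sum_{e\text{ red}} d(e)^2$, and the sharp bound then reduces to the combinatorial claim $\sum_{e\text{ red}} d(e)^2 \leq 2GB$, which counts ordered pairs of rainbow triangles sharing a red edge and should be accessible by case analysis on the color pattern at the two non-red edges. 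Either route would yield the classical counting proof promised by the abstract; uniqueness of the $K_4$-blow-up extremizer would then follow by tracking the equality conditions (forcing each cherry over a red edge to close to a rainbow triangle and forcing a $4$-partite structure on the underlying graph).
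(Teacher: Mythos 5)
Your Cauchy--Schwarz fallback is essentially the route taken by the paper's counting proof, but you have left the crux of the argument unproved. The paper works with ordered red pairs $\mathcal R$ (so $|\mathcal R| = 2R$) and the directed count $d^+(uv)$ of vertices $w$ with $uw$ blue, $vw$ green; then $T = \sum_{uv\in\mathcal R} d^+(uv) \le \sqrt{2R}\cdot\sqrt{\sum_{uv\in\mathcal R} d^+(uv)^2}$, and the whole theorem reduces to the bound $\sum_{uv\in\mathcal R} d^+(uv)^2 \le GB$. This last inequality is the real content: it is established by an explicit injection (the paper's Lemma~\ref{lem:gxb}) from four-vertex configurations $(u,v,x,y)$ with $uv$ red, $ux,uy$ blue, $vx,vy$ green into (green edge, blue edge) pairs, with a case split on whether the two image edges share a vertex. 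Your unordered reformulation $\sum_{e\text{ red}} d(e)^2 \le 2GB$ is true, but not for free: writing $d(e) = d^+(uv)+d^+(vu)$, it follows from the ordered lemma plus $(a+b)^2\le 2(a^2+b^2)$, and a direct ``case analysis'' injection for the unordered version is more delicate because the two triangles over $e$ may have opposite orientations. Saying this ``should be accessible'' is where your proposal stops; no proof of the key lemma is supplied.

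On the entropy side, your Shearer/Loomis--Whitney computation correctly yields $T^2\le 8RGB$ and you correctly identify that a symmetrization is needed, but you do not find it. The paper's actual entropy proof is not a Shearer argument at all: it samples a rainbow triangle $(v_g,v_b,v_r)$, resamples $v_r'$ conditionally independently of $v_r$ given the red edge $(v_g,v_b)$ with $H(v_r'\mid v_g,v_b)=H(v_r\mid v_g,v_b)$, and then observes that the quadruple $(v_g,v_b,v_r,v_r')$ lies in the set $S$ of the very same Lemma~\ref{lem:gxb}, so $H(v_g,v_b,v_r,v_r')\le\log_2(GB)$. Combined with $H(v_g,v_b)\le\log_2(2R)$ and the identity $2H(v_g,v_b,v_r)=H(v_g,v_b,v_r,v_r')+H(v_g,v_b)$, this gives the sharp bound. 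So both the counting and the entropy proof are carried entirely by the injective counting lemma $|S|\le GB$, and that lemma is precisely the missing piece in your write-up. Until you prove it (or its unordered variant $\sum_e d(e)^2\le 2GB$), neither branch of your proposal constitutes a proof of $T^2\le 2RGB$.
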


The bound in Theorem \ref{triangleent} is tight when $\Gamma$ is a balanced blowup of a properly $3$-edge-colored $K_4$, shown in Figure~\ref{fig:iter}(a).

Theorem \ref{triangleent}
is motivated by  the \emph{joint} problem,
which asks to determine the maximum number of joints in $\mathbb{R}^d$ determined by $N$ lines, where a {\it joint} is a point $P$ with a $d$-tuple of lines intersecting at
$P$ that spans the entire space $\mathbb{R}^d$.
 The joint problem was proposed 
 in~\cite{Chazelle1992Counting}. There, a construction of $N$ lines with
many joints was given: Choose $k$ hyperplanes in $\mathbb{R}^d$ in general position.  Then, the intersection of every $(d-1)$-tuple of the hyperplanes is a line,  giving 
$N=\binom{k}{d-1}$ lines,
and every $d$-tuple of planes provides a joint, 
generating $\binom{k}{d}$ joints. Guth~\cite[Section 2.5]{Guth2016Polynomial} conjectured that this construction is optimal, and Guth's conjecture was verified asymptotically by Yu and Zhao~\cite{Yu2023Joints}, and exactly  by Chao and Yu~\cite{chao2023tight}.

Motivated by this construction, Yu and Zhao~\cite{Yu2023Joints} defined {\it  generically
induced configurations} as follows:
Let $\mathcal H$ be a set of hyperplanes in $\mathbb R^d$,
and let $L$ be a subset of the $(d-1)$-intersections of elements of $\mathcal H$.
In this setting, every joint given by the line set $L$ is an intersection point of $d$ hyperplanes from $\mathcal H$.
A generically induced configuration $(\mathcal H,L)$
has a natural representation as a $(d-1)$-uniform hypergraph,
where each hyperplane of $\mathcal H$ corresponds to a vertex, and each line in $L$ corresponds to a hyperedge.
In the hypergraph representation, a joint corresponds to a complete $(d-1)$-uniform hypergraph on $d$ vertices.
In particular, in $\mathbb R^3$,
a joint corresponds to a triangle in a graph.

The following \emph{multijoint} problem was introduced by Zhang~\cite{Zhang2020Multijoints}, who attributes the problem to Carbery:
Let $L_1,L_2,L_3$ be three families of lines in $\mathbb{R}^3$; 
what is the maximum number of joints consisting of one line from each set $L_i$?
When $L_1$, $L_2$, and $L_3$
are obtained by partitioning the line set of a generically induced configuration $(\mathcal H, L)$ into distinct color classes, this question asks for the maximum number of rainbow triangles in the graph representation of $(\mathcal H,L)$.

\section{New Results}

Motivated by Theorem \ref{triangleent} and its applications to the multijoint problem,
we continue the study of densities of multicolored subgraphs of edge colored graphs, with our bounds expressed as a function of the number of edges of each color.
We first investigate a problem analogous to that of Theorem \ref{triangleent}, where instead of the number of rainbow triangles, we are interested in the number of properly $3$-edge-colored $K_4$'s.

\begin{theorem}
\label{thm:K4-counting}
    Let $\Gamma$ be a graph with $R$ red edges, $G$ green edges, and $B$ blue edges, and suppose that $\Gamma$ has $K$ properly colored  $K_4$'s. Then, $K \leq \frac 14 (RGB)^{2/3}$.
    Furthermore, if $K = \frac 14 (RGB)^{2/3} > 0$,
    then $\Gamma$ is obtained from a balanced blowup of a properly colored $K_4$, 
     by possibly adding a set of isolated vertices.
\end{theorem}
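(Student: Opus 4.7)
My plan is to localize the count of properly $3$-edge-colored $K_4$'s at each vertex through the link structure, apply a tripartite refinement of Theorem~\ref{triangleent} locally, and then aggregate using a H\"older-type estimate that becomes an equality on the balanced blowup of a properly $3$-edge-colored $K_4$. For a vertex $v\in V(\Gamma)$ I partition $N(v)$ into $R_v$, $G_v$, $B_v$ according to the color of the edge to $v$. A properly colored $K_4$ through $v$ is equivalent to a triple $(a,b,c)\in R_v\times G_v\times B_v$ with $ab$ blue, $ac$ green, and $bc$ red (this ``complementary'' pattern is forced by propriety at each of $v,a,b,c$); writing $K_v$ for this count, one has $4K=\sum_v K_v$.

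Restricting $\Gamma$ to $R_v\cup G_v\cup B_v$ and retaining only the blue edges between $R_v$ and $G_v$, the green edges between $R_v$ and $B_v$, and the red edges between $G_v$ and $B_v$ yields a $3$-partite $3$-edge-colored graph $L_v$ in which every triangle is automatically rainbow and corresponds to a properly colored $K_4$ through $v$. Theorem~\ref{triangleent} applied to $L_v$ --- or better, its tripartite strengthening, a short Shearer-style entropy computation that removes the factor $\sqrt{2}$ when each pair of parts carries a single color --- gives
\[
K_v\;\le\;\sqrt{e^{GB}_R(v)\,e^{RB}_G(v)\,e^{RG}_B(v)},
\]
where $e^{XY}_c(v)$ denotes the number of color-$c$ edges of $\Gamma$ between $X_v$ and $Y_v$. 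A direct double count then yields the key global identity $\sum_v e^{GB}_R(v)=\sum_v e^{RB}_G(v)=\sum_v e^{RG}_B(v)=T$, where $T$ is the total number of rainbow triangles in $\Gamma$: each rainbow triangle contributes exactly one ``opposite-cherry'' at the unique vertex opposite its red (respectively green, blue) edge.

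Summing the local bounds and combining with the global estimate $T\le\sqrt{2RGB}$ from Theorem~\ref{triangleent} through a suitable H\"older-type inequality should then yield
\[
4K\;\le\;\sum_v\sqrt{e^{GB}_R(v)\,e^{RB}_G(v)\,e^{RG}_B(v)}\;\le\;(RGB)^{2/3}.
\]
For the uniqueness statement I would trace back each inequality: equality in the tripartite local bound forces $L_v$ to be a complete tripartite graph with each bipartite side monochromatic, equality in Theorem~\ref{triangleent} globally forces (via Theorem~\ref{thm:K3-stability}) a blowup of a properly colored $K_4$, and equality in the H\"older step forces the four parts to be of the same size; isolated vertices contribute nothing to $K,R,G,B$ and hence are permitted.

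The main obstacle is the final H\"older step. A naive application of Cauchy--Schwarz or H\"older typically loses a constant factor (of $\sqrt{2}$ or $2^{2/3}$) and delivers only $K\le c\,(RGB)^{2/3}$ with $c>\tfrac14$. Securing the sharp constant $\tfrac14$ requires choosing the H\"older exponents --- or, equivalently, a weighted flag-algebra-style averaging --- so that the identity $\sum_v e^{XY}_c(v)=T$ and the global bound $T\le\sqrt{2RGB}$ combine to make every inequality in the chain tight simultaneously on the balanced blowup $n\cdot K_4$, which is exactly the configuration that is tight for both Theorem~\ref{triangleent} and the AM--GM step $(ab+cd)(ac+bd)(ad+bc)\ge 8(abcd)^{3/2}$ converting the link sum into an expression purely in $R,G,B$.
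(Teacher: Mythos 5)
Your localization at vertices is a genuinely different decomposition from the paper's, and the pieces you set up are essentially correct: the reformulation of a properly colored $K_4$ through $v$ as a triple $(a,b,c)\in R_v\times G_v\times B_v$ with the ``complementary'' colors on $ab,ac,bc$, the tripartite bound $K_v\le\sqrt{e^{GB}_R(v)\,e^{RB}_G(v)\,e^{RG}_B(v)}$ (which does follow from Shearer), the identity $4K=\sum_v K_v$, and the double count $\sum_v e^{GB}_R(v)=T$ are all right. The problem is the step you yourself flag as the ``main obstacle,'' and it is not merely a matter of optimizing Hölder exponents: the inequality $\sum_v\sqrt{a_vb_vc_v}\le(RGB)^{2/3}$ simply does not follow from the information you have assembled, namely $\sum_v a_v=\sum_v b_v=\sum_v c_v=T$ and $T\le\sqrt{2RGB}$. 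Under those constraints alone the left side can be as large as $T^{3/2}\le(2RGB)^{3/4}$, which exceeds $(RGB)^{2/3}$ once $RGB$ is large, and a standard Hölder $(3,3,3)$ combined with $a_v\le R$ etc.\ only yields $4K\le T\,(RGB)^{1/6}\le\sqrt{2}\,(RGB)^{2/3}$, off by $\sqrt{2}$. Recovering the sharp $\tfrac14$ would require input about the spatial distribution of $a_v,b_v,c_v$ that your aggregation does not capture, and you have not identified what that input is or how to feed it into the Hölder step. Your uniqueness sketch inherits the same gap, since it assumes every inequality in the chain can be forced tight.

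The paper avoids this entirely by rooting at edges rather than vertices. For each ordered red pair $uv$ one sets $d^+(uv)$ (blue--green cherries) and $d^-(uv)$ (green--blue cherries), observes $d_K(uv)\le d^+(uv)d^-(uv)$, applies Cauchy--Schwarz together with $\sum d^+(uv)^2=\sum d^-(uv)^2$ to get $4K\le\sum_{uv}d^+(uv)^2$, and bounds the last sum by $GB$ using Lemma~\ref{lem:gxb}. This yields the three pairwise bounds $4K\le RG$, $4K\le GB$, $4K\le RB$ directly, and the theorem's statement is then just the geometric mean. That is strictly stronger than what your route could ever produce even if the Hölder step closed, and it has a clean equality analysis: equality in the geometric mean forces all three pairwise bounds tight, hence tightness of Lemma~\ref{lem:gxb} in all color permutations (giving condition (a) of Lemma~\ref{lem:K4char}) and tightness of Cauchy--Schwarz (giving $d^+=d^-$, from which condition (b) follows), and then Lemma~\ref{lem:K4char} finishes. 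If you want to keep the vertex-link viewpoint, I would suggest instead trying to derive one of the three pairwise bounds $4K\le GB$ from your local data and then invoking symmetry, rather than aiming directly for the geometric-mean bound via a single global aggregation.
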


Note that we actually prove a slightly stronger result:
\begin{equation}
K\ \le \ \frac{1}{4}\min\{RG, GB, RB\}.
    \end{equation}

Given a $3$-edge-colored graph corresponding to a generically induced configuration $(\mathcal H,L)$
whose line set is partitioned into three color classes $L_1,L_2,L_3$,
a properly colored $K_4$ subgraph corresponds to a tetrahedron
given by four planes of $\mathcal H$
in which opposite edges have the same color
and each vertex is a multijoint.
Therefore, Theorem \ref{thm:K4-counting}
gives an exact solution for the maximum number of such properly colored tetrahedra in terms of $|L_1| |L_2||L_3|$.
While this problem is somewhat contrived,
according to~\cite{Yu2023Joints},  a problem is ``a rare instance in incidence geometry where the sharp constant is determined."

While Theorem \ref{triangleent} gives a sharp upper bound for the number of multijoints in a $3$-colored generically induced configuration, it is unknown if there exists a $3$-colored joint configuration that is not generically induced whose number of multijoints exceeds the bound in Theorem \ref{triangleent}. 
On the other hand, for the properly $3$-colored tetrahedron problem, a simple construction shows that the bound in Theorem \ref{thm:K4-counting}
for the number of properly $3$-colored tetrahedron in a generically induced $3$-colored multijoint configuration does not hold for configurations that are not generically induced. Indeed, Theorem \ref{thm:K4-counting} implies that a generically induced multijoint configuration with  $L_1$ red lines, $L_2$ green lines, and $L_3$ blue lines has at most $\frac 14 (L_1 L_2 L_3)^{2/3}$ properly colored tetrahedra.
In particular, a generically induced configuration with $N$ lines gives at most $\frac{1}{36} N^2$ properly colored tetrahedra.
However, the following is a $3$-colored joint configuration that is not generically induced and that has $N$ lines and 
$(\frac 1{16} + o(1)) N^2$
properly  colored tetrahedra.
Let $L_1$ and $L_2$ be skew lines in $\mathbb R^3$.
Designate two disjoint sets $A_1$ and $A_2$, each of $n$ points, on $L_1$.
Similarly, designate two disjoint sets $B_1$ and $B_2$, each of $n$ points, on $L_2$.
For each pair $p \in A_i$ and $q \in B_j$, add a green line containing $p$ and $q$ if and only if $i=j$; otherwise, add a blue line containing $p$ and $q$.
Then, every $C_4$ that is properly colored with green and blue gives a properly $3$-colored tetrahedron. Thus, we have $N = 4n^2 + 2$ lines and $n^4 = (\frac 1{16} + o(1)) N^2$ properly colored $K_4$'s.

Note that the non-multicolor version of the
tetrahedron
problem also seems to be new, but solving it
likely
requires different methods.

\begin{problem}
\label{prob:tet}
Given $N$ lines in $\mathbb{R}^3$, what is the maximum number tetrahedra 
that they determine?
\end{problem}
 It is natural to think that the answer to Problem~\ref{prob:tet}
  comes from planes;
 that is, $k$ planes generate $N=\binom{k}{2}$ lines, which generate $\binom{k}{4} = (\frac 16 + o(1))N^2$ tetrahedra.
However, 
similarly to the multicolored setting,
there is a better construction.
Let $L_1$ and $L_2$ be two skew lines in $\mathbb R^3$.
Select $n$ points on each line,
and join 
these two sets of $n$ points with a $K_{n,n}$. Every $C_4$ in this $K_{n,n}$ gives a tetrahedron. There are $N = n^2 + 2$ lines and $\binom n2^2 = (\frac 14 + o(1)) N^2$ tetrahedra.
We can see that this second construction is not generically induced, as a point may belong to $n+1$ lines.
 
 It is natural to ask if there is a rainbow version of Theorem \ref{thm:K4-counting}:

\begin{question}\label{Q2}
Let $\Gamma$ be a graph with edges colored by colors $C=\{1,\ldots,6\}$. Denote by $C_i$  the number of edges colored by color $i$.
What is  the number of rainbow copies of $K_4$ in $\Gamma$? 
\end{question}

We conjectured in an earlier version of our paper, that the answer should be 
$H \leq \sqrt[3]{\prod_iC_i}$, as it 
 would be tight for a blow-up of a fixed rainbow coloring of $K_4$, as well as blow-ups of some graphs on six vertices.
 However, as it was discovered by Bowen Li, one can find better constructions. Our current best construction is depicted in Figure~\ref{fig:Bowen}.
The construction is obtained from 
a 6-edge-coloring of $K_8$, where the colors have multiplicities $4,4,4,4,6,6,$ and it contains $24$ rainbow $K_4$'s.
We blow this graph up, by replacing each vertex with an independent set of size $t$, giving 
color multiplicities of 
$4t^2,4t^2,4t^2,4t^2,6t^2,6t^2$.
Furthermore, the number of rainbow $K_4$ copies is $24t^4$, which is more than 
$\sqrt[3]{(4t^2)^4 (6t^2)^2} \approx 20.96 t^4$. 
The construction was found by AlphaEvolve~\cite{AlphaEvolve}.

\begin{figure}
\def\e{1.5}
\def\x{2}
\def\w{8}
\def\t{-2}
\def\k{
\draw
(-5,3+\t) node[circle,draw, inner sep=\x](a1) {}
(0,4+\t) node[circle,draw, inner sep=\x](a2) {}
(5,3+\t) node[circle,draw, inner sep=\x](a3) {}
(-6,-2) node[circle,draw, inner sep=\x](b1) {}
(-2.5,-4.5) node[circle,draw, inner sep=\x](b2) {}
(2.5,-4.5) node[circle,draw, inner sep=\x](b3) {}
(6,-2) node[circle,draw, inner sep=\x](b4) {};

\draw[black,line width=\w pt](a3)--(a2);
\draw[black,line width=\w pt](a3)--(a1);
\draw[black,line width=\w pt](a1)--(a2);

\draw[purple,line width=\w pt](b3)--(b2);
\draw[purple,line width=\w pt](b3)--(b1);
\draw[purple,line width=\w pt](b1)--(b2);
\draw[purple,line width=\w pt](b1)--(b4);
\draw[purple,line width=\w pt](b2)--(b4);
\draw[purple,line width=\w pt](b3)--(b4);

\draw[red,line width= \w pt](a1)--(b1);
\draw[blue,line width= \w pt](a1)--(b2);
\draw[yellow!90!black,line width= \w pt](a1)--(b3);
\draw[green!80!black,line width=\w pt](a1)--(b4);

\draw[red,line width= \w pt](a2)--(b2);
\draw[blue,line width= \w pt](a2)--(b1);
\draw[yellow!90!black,line width= \w pt](a2)--(b4);
\draw[green!80!black,line width=\w pt](a2)--(b3);

\draw[red,line width= \w pt](a3)--(b3);
\draw[blue,line width= \w pt](a3)--(b4);
\draw[yellow!90!black,line width= \w pt](a3)--(b1);
\draw[green!80!black,line width=\w pt](a3)--(b2);

}
\begin{center}
\tikzset{bignode/.style={circle,draw, inner sep=10pt}}
\tikzset{mythick/.style={line width=8pt,opacity=0.9}}
\tikzset{colorA/.style={blue, mythick}}
\tikzset{colorB/.style={cyan, mythick}}
\tikzset{colorC/.style={red, mythick}}
\tikzset{colorD/.style={green!80!black, mythick}}
\tikzset{colorE/.style={gray, mythick}}
\tikzset{colorF/.style={magenta, mythick}}
\begin{tikzpicture}[scale=1]
\draw\foreach \x in {1,2,...,8}{
(-22.5+45*\x:2.8) coordinate(c\x) {} 
};
\draw
(c1)node[bignode](1){}
(c4)node[bignode](2){}
(c3)node[bignode](3){}
(c2)node[bignode](4){}
(c5)node[bignode](5){}
(c6)node[bignode](6){}
(c8)node[bignode](7){}
(c7)node[bignode](8){}
;
\draw[colorD] (2)--(3)--(5)--(6)--(2)--(5)(3)--(6);
\draw[colorA] (1)--(8)--(7)--(4)--(1)--(7)(8)--(4);
\draw[colorB] (1)--(3)(2)--(4)(8)--(5)(7)--(6);
\draw[colorC] (1)--(6)(2)--(8)(3)--(7)(4)--(5);
\draw[colorE]  (1)--(5)(2)--(7)(3)--(8)(4)--(6);
\draw[colorF] (1)--(2)(3)--(4)(5)--(7)(6)--(8);
\end{tikzpicture}
\end{center}
\caption{
A blow-up of a 6-edge-coloring of $K_8$ with 24 rainbow copies of $K_4$.}\label{fig:Bowen}
\end{figure}

We answer  a weaker version of the question, counting only a fixed rainbow coloring of $K_4$. In fact, we obtain a tight upper bound for the number copies of a fixed rainbow coloring of $K_k$ for every $k\ge 4$.


\begin{theorem}\label{thm:fixedrainbowKk}
    Let $F$ be a fixed rainbow coloring of a $k$-vertex complete graph, where $k\geq 4$.
Let $\Gamma$ be a graph with edges colored by colors $C=\{1,\ldots,\binom{k}{2}\}$. Denote by $C_i$  the number of edges colored by color $i$.
If $K$ is the number of copies of $F$ in $\Gamma$, then
$K^{k-1} \leq {\prod_iC_i}$.
\end{theorem}

A variant of Theorem~\ref{thm:fixedrainbowKk} was proved by Cairncross, Mizgerd and Mubayi
\cite[Theorem 1.3]{CAIRNCROSS2025}.
They showed that in an edge-colored $K_n$ the number of copies of a fixed rainbow coloring of a $k$-vertex complete graph $F$ is
maximized by an iterated blow-up of $F$ for all for $k\geq 11$, i.e., it is at most  $$ \binom{n}{k}\cdot \frac{k!}{k^k-k}.$$
We would like to emphasize, that in 
\cite{CAIRNCROSS2025} the host graph is a clique, while for Theorem~\ref{thm:fixedrainbowKk} the maximum is obtained via coloring a not necessarily complete graph.

While working on \cite{Yu2023Joints}, Zhao asked the first and the last authors if the multiplicative constant $2$ in Theorem~\ref{triangleent} is best possible.\footnote{He asked before it became a theorem, i.e., before~\cite{ChaoEntropy2024}.}
Using the same proof techniques as those for Theorems \ref{thm:K4-counting} and \ref{thm:fixedrainbowKk}, we develop a short flag-algebra proof for Theorem \ref{triangleent},
along with an elementary counting proof and a new entropy proof. The latter two proofs are obtained by using parts of our flag algebra argument.
We also prove uniqueness of the extremal construction.

\begin{theorem}
\label{thm:K3-stability}
Let $\Gamma$ be a graph with $R$ red edges, $G$ green edges, and $B$ blue edges.
If $\Gamma$ has $\sqrt{2 RGB} > 0$ rainbow triangles,
then 
$\Gamma$ is obtained from a blowup of a properly $3$-edge-colored $K_4$
 by possibly adding a set of isolated vertices.
\end{theorem}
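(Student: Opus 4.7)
The plan is to derive uniqueness by analyzing the equality case of the bound $T^2 \leq 2RGB$ from Theorem~\ref{triangleent}. Each of the three proofs offered in the paper chains together a small number of Cauchy--Schwarz, convexity, or entropy inequalities; assuming $T = \sqrt{2RGB} > 0$ forces every one of these to be tight, and I would read off these tight cases and translate them into structural statements about $\Gamma$.

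The first step is to extract, from the tight inequalities, the following edge-regularity: for each color $c \in \{\text{red},\text{green},\text{blue}\}$ and each edge $uv$ of color $c$, the number of rainbow triangles through $uv$ depends only on $c$, and more finely, the common ``green neighbor of $u$, blue neighbor of $v$'' set and its mirror image are balanced in a way that is independent of the particular edge chosen. Having established this, I would define an equivalence relation on the non-isolated vertices by $u \sim v$ iff $u$ and $v$ have identical red, green, and blue neighborhoods. The regularity conditions, combined with a double-count of rainbow triangles weighted by the identity $T = \sqrt{2RGB}$, should pin down the number of non-trivial equivalence classes to exactly four, with all edges between any two classes of a single color.

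The second step is then purely combinatorial: on four classes $V_1, V_2, V_3, V_4$ with monochromatic bipartite pieces, each color must appear as a perfect matching on the reduced $K_4$ (so that every cross-triple of classes yields a rainbow triangle and the extremal identity is preserved), which is exactly the proper $3$-edge-coloring of $K_4$. Isolated vertices contribute $0$ to $R, G, B, T$ and account for the only additional freedom in the conclusion. The main obstacle is the first step: translating the tight entropy or Cauchy--Schwarz equality into precise combinatorial regularity. I expect the flag-algebra proof will be the most convenient entry point, since its equality slack is a non-negative sum of flag-density squares; setting each square to zero yields explicit polynomial identities among local densities strong enough to force the equivalence-class structure above, after which the reduction to a proper $3$-edge-colored $K_4$ blowup is routine.
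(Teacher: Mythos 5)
Your high-level plan --- analyze the equality case of $T \le \sqrt{2RGB}$, extract local regularity from the tight inequalities, then reduce to a $K_4$ blowup --- matches the paper in spirit, and the observation that Cauchy--Schwarz tightness forces the number of rainbow triangles through a red edge to be constant is indeed one of the two pillars of the paper's argument. But there is a real gap in your step one, and your preferred entry point is the wrong one. The paper's uniqueness argument hinges on a \emph{second}, much stronger structural consequence of equality: the injection $f\colon S\to S'$ of Lemma~\ref{lem:gxb} must be a bijection, and this translates (after permuting colors) into the property that \emph{every} green edge and \emph{every} blue edge are joined by a red edge, and likewise for all permutations of the three colors. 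This is condition~(a) of Lemma~\ref{lem:K4char}, and it is what drives the entire combinatorial reduction to four classes in the paper's proof: with (a) in hand, fixing one vertex $v$ and looking at its monochromatic neighborhoods $V_R,V_G,V_B$ immediately forces cross-edges between them to be monochromatic of the complementary color, forces each $V_c$ to be independent, and forces the remaining non-isolated vertices to form a fourth class. The degree-regularity coming from Cauchy--Schwarz (your ``edge-regularity'') is not strong enough to produce the equivalence-class structure on its own; it only becomes usable once (a) supplies the global joining property.

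Your suggestion to use the flag-algebra calculation as the ``most convenient entry point'' runs into a concrete obstacle that the paper itself flags in the concluding remarks: the flag-algebra slack in Inequality~\eqref{eqn:gxb-flags} is \emph{not} a sum of flag squares. It is a nonnegative linear combination of graph densities (the lower-order terms of \eqref{eqn:gxb-flagsB}), and the flag-algebra framework is inherently asymptotic, so extracting the exact finite condition~(a) from it ``cannot be obtained in a straight-forward way.'' The counting proof --- where the slack is exactly the failure of injectivity of $f$ --- is the right entry point, and it is what the paper uses. To repair your plan, replace the flag-algebra step with the observation that $|S| = |S'|$ makes $f$ a bijection, record the resulting condition~(a), and then your equivalence-class reduction becomes essentially the paper's Lemma~\ref{lem:K4char} argument. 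You would also want to note up front that every edge lies in a rainbow triangle (else deleting it strictly decreases $\sqrt{2RGB}$ without changing $T$, contradicting Theorem~\ref{triangleent}), which is needed to guarantee that $d \ge 1$ before invoking the $K_4$-blowup characterization.
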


 For a blowup of a properly $3$-edge-colored $K_4$, see Figure~\ref{fig:iter}(a).

Our proofs  use the method of flag algebras, introduced by Razborov~\cite{RazborovFlag2007}.
\onlypreprint{We recall the main definitions from flag algebras in the Appendix.}
One noteworthy aspect of our proofs is that they are computer-free.
Moreover, 
for most of our results,
using the plain flag algebra method with sum-of-squares would not be computationally feasible. 
An equivalent reformulation of Theorem~\ref{triangleent} in flag algebras
that is 
approachable 
using semidefinite programming 
is 
\begin{eqnarray}
\label{eqn:triangleent}
\left(\Fn30{4 2 3}\right)^2 \leq 9 \cdot \Fuu4 \times \Fuu3 \times \Fuu2.
\end{eqnarray}
Verification of this inequality requires a computation using $3$-edge-colored graphs on $6$ vertices, which  would only be possible to achieve on high-memory nodes of a supercomputer.
Moreover, the smallest case of Theorem \ref{thm:fixedrainbowKk} would require a computation using $6$-edge-colored graphs on $12$ vertices,
which is entirely infeasible.
We also 
managed to translate our proof to a counting proof by hand, which probably would not be possible in the case of a large calculation.

Inspired by the rainbow triangle problem of Erd\H os and S\'os \cite{Erdos1972},
the second author also poses the following question.
\begin{question}\label{Q1}
    What is the maximum number of  rainbow copies of a $K_4$ in
$6$-edge-colored complete graphs $K_n$?
\end{question}

The iterated blow-up of the construction depicted in Figure~\ref{fig:Bowen} has rainbow $K_4$ density 
$72/511\approx 0.14090$.
As it is a nice symmetric coloring, there is a chance that it is best possible.
A flag algebra calculation on $5$ vertices gives an upper bound of $0.1552$.



In Section~\ref{sec:K4} we give three proofs of Theorem~\ref{thm:K4-counting};
one uses flag algebras, one is a counting proof and the third uses entropy. 
In Section~\ref{sec:fixedrainbowK4} we prove Theorem~\ref{thm:fixedrainbowKk}.
In Section~\ref{sec:thm11} we also give three proofs of Theorem~\ref{triangleent} using flag algebras, counting, and entropy. 
We have some concluding remarks in 
Section~\ref{sec:remarks}\onlypreprint{ and a brief introduction to flag algebras in the Appendix}.

\section{Proof of Theorem \ref{thm:K4-counting}}
\label{sec:K4}

The first goal of this section is to prove the following claim, which is the first part of Theorem \ref{thm:K4-counting}.
\begin{claim}
\label{claim:K23}
$K \leq \frac 14 \cdot \min \{RG, GB, RB\}$.
Consequently, $K \leq \frac 14 (RGB)^{2/3}$.
\end{claim}
The inequality $K \leq \frac 14 (RGB)^{2/3}$ in the claim follows by taking the geometric mean of the three upper bounds for $K$; therefore, it suffices just to prove the first part of the claim.
We give a short proof using flag algebras, which we translate to a counting proof as well as to an entropy proof.

\subsection{Flag algebra proof}
We assume the reader of this paper has  some basic backgrounds in flag algebras; we provide some introduction to flag algebras in the Appendix\onlyjournal{ of the arXiv version of the paper}, which should be sufficient to understand our proofs.

Consider a graph $\Gamma$ with $R$ red edges, $G$ green edges, and $B$ blue edges, and let $K$ be the number of properly colored $K_4$ subgraphs of $\Gamma$. First, we provide  a short  proof of 
the stronger
inequality $K \leq \frac 14 \cdot \min \{RG, GB, RB\}$
using the language of flag algebras.

\begin{lemma}
\begin{equation}
\label{eqn:gxb-flags}
\Fuu3 \times \Fuu2 \geq  \frac 13 \cdot \left ( \Fuuuu422331  + \Fuuuu422332 +  \Fuuuu422333 +  \Fuuuu422334 \right ) = 4 \cdot \left \llbracket \Fllu423 ^2 \right \rrbracket.
\end{equation}
\end{lemma}
\begin{proof}
The inequality holds, as in each of the shown graphs on four vertices, 
there are six ways to partition the four vertices into parts $A_1$ and $A_2$ of size $2$,
and for exactly two of these partitions, 
$A_1$ induces a green edge while $A_2$ induces a blue edge.
The identity holds, as 
the four $4$-vertex graphs 
comprise all $4$-vertex $3$-edge-colored graphs in which a red edge $uv$
belongs to two rainbow triangles, with $u$ opposite the green edge and $v$ opposite the blue edge in each triangle.
Furthermore, given one of these four graphs $H$, the probability that a random injection $\{1,2\} \hookrightarrow V(H)$ both maps $1$ to the unique vertex with red degree $1$ and blue degree $2$, and also maps $2$ to the unique vertex with red degree $1$ and green degree $2$, is $\frac 1{12}$.
\end{proof}

\begin{lemma}\label{lemma:FA4}
\begin{align}
    \label{eq:K4flag}
\Fuuuu423324 \leq \frac 32 \cdot \min\left\{ \Fuu4 \times \Fuu3,\quad   \Fuu3 \times \Fuu2,\quad   \Fuu4 \times \Fuu2  \right\}.
\end{align}    
\end{lemma}

\begin{proof}
We use Razborov's Cauchy-Schwarz inequality for flags
\cite[Theorem 3.14]{RazborovFlag2007},
which states that two flags $F,G$ of a type $\sigma$ satisfy $\llbracket F \times G
\rrbracket
\leq \sqrt{\llbracket F^2 \rrbracket \llbracket G^2 \rrbracket } $.
Combining this Cauchy-Schwarz inequality with the inequality~\eqref{eqn:gxb-flags}, we obtain
\begin{equation}
\Fuuuu423324 \ = \ 6 \cdot \left\llbracket \frac 12  \Flluu423324 \right\rrbracket 
\  \leq \ 6 \cdot \left\llbracket \Fllu432 \times \Fllu423 \right\rrbracket 
\  \leq \  6\cdot\sqrt{\left\llbracket \left(\Fllu432\right)^2 \right\rrbracket \cdot \left\llbracket \left(\Fllu423\right)^2 \right\rrbracket}.
\end{equation}
Since 
\[
\left\llbracket \left(\Fllu423\right)^2 \right\rrbracket = \left\llbracket \left(\Fllu432\right)^2 \right\rrbracket,
\]
we obtain
\[
\Fuuuu423324 \leq 6 \cdot \left\llbracket \Fllu423^2 \right\rrbracket \leq \frac 32\cdot  \Fuu2 \times \Fuu3,
\]
with the last inequality following from~\eqref{eqn:gxb-flags}.
By repeating the argument with a labeled green edge type and a labeled blue edge type,
\[
\Fuuuu423324\leq \frac 32 \cdot \Fuu2 \times \Fuu4 \quad \quad\quad\text{ and } \quad\quad\quad\Fuuuu423324\leq \frac 32 \cdot \Fuu4 \times \Fuu3.
\]
The three inequalities above imply \eqref{eq:K4flag}.
\end{proof}

We note that Lemma~\ref{lemma:FA4} 
can be strengthened by using a sum-of-squares argument, which is  typical for applications of flag algebras:
\[
\Fuuuu423324 \leq \Fuuuu423324 + 3 \cdot \left\llbracket\left( \Fn32{4 2 3} - \Fn32{4 3 2}\right)^2\right\rrbracket   \leq  \frac32 \cdot\Fn202 \times \Fn203.
\]
The last inequality above can be checked using the equations and inequalities in the proof of Lemma \ref{lemma:FA4}:

\begin{align*}
\Fuuuu423324 &\leq \Fuuuu423324 + 3 \cdot \left\llbracket\left( \Fn32{4 2 3} - \Fn32{4 3 2}\right)^2\right\rrbracket  
= \Fuuuu423324 + 3 \cdot \left\llbracket
\Fn32{4 2 3}^2 -2 \cdot\Fn32{4 2 3} \times\Fn32{4 3 2} + \Fn32{4 3 2}^2
\right\rrbracket \\
&= \Fuuuu423324 + 3 \cdot \left\llbracket
\Fn32{4 2 3}^2 \right\rrbracket -6 \cdot \left\llbracket \Fn32{4 2 3} \times \Fn32{4 3 2}  \right\rrbracket + 3\cdot \left\llbracket \Fn32{4 3 2}^2
\right\rrbracket \\
&= \underbrace{\left(\Fuuuu423324 -6 \cdot \left\llbracket \Fn32{4 2 3} \times \Fn32{4 3 2} \right\rrbracket \right)}_{\leq 0} +6 \cdot \left\llbracket
\Fn32{4 2 3}^2 \right\rrbracket \leq  6 \cdot \left\llbracket
\Fn32{4 2 3}^2 \right\rrbracket
\leq  \frac32 \cdot\Fn202 \times \Fn203.
\end{align*}

We also need a technical lemma that allows us to avoid the error terms usually associated with flag algebras.

\begin{lemma}\label{lemma:blowup}
Let $\Gamma$ be an $r$-edge-colored graph on $n$ vertices 
with $C_i$ edges of each color $i$.
Let $F$ be a $K_k$ with a fixed edge coloring, and let $H$ be the number of copies of $F$ in $\Gamma$.
Then, there exists a convergent sequence of $r$-edge-colored graphs whose limit $\phi \in \text{Hom}^+(\mathcal A, \mathbb R)$ satisfies
\[
\phi (e_i) = \frac{2C_i}{n^2} \quad \text{ for } \quad  1 \leq i \leq r, \quad\text{ and } \quad
\phi (F) = \frac{k! K}{n^k},
\]
where $e_i$ is the unlabeled edge flag on $2$ vertices of color $i$.
\end{lemma}
\begin{proof}
Let $\Gamma$ be an $r$-edge-colored graph on $n$ vertices. 
Define the sequence $(\Gamma_\ell)_{\ell\geq1}$, where $\Gamma_\ell$ is a graph obtained from $\Gamma$ by replacing each of its vertices by an independent set of $\ell$ vertices and each edge of color $c$ by a copy of $K_{\ell,\ell}$ with all edges of the $K_{\ell,\ell}$ colored $c$.
This is a convergent sequence of graphs (see \cite[Section 2.5]{LSz}), 
and its limit corresponds to  a homomorphism $\phi \in \text{Hom}^+(\mathcal{A},\mathbb{R})$.
Since each edge in $\Gamma$ is replaced by $\ell^2$ edges in $\Gamma_\ell$ of the same color, 
$\Gamma_\ell$ has $C_i\ell^2$ edges of color $i$.
Similarly,
as $F$ is a complete graph, every homomorphism $V(F) \rightarrow V(\Gamma)$ is injective; therefore,
$\Gamma_\ell$ has $F\ell^k$ copies of $F$.
A straightforward calculation shows
\[
\phi(e_i) = \lim_{\ell \rightarrow \infty} \frac{C_i \ell^2}{\binom{n \ell}{2}} = \frac{2C_i}{n^2}\quad  \text{ for } \quad 1 \leq i \leq r, \quad \text{ and } \quad \phi(F) = \lim_{\ell \rightarrow \infty} \frac{K\ell^k}{\binom{n \ell}{k}} = \frac{k!  K}{n^k}.
\]
\end{proof}
Combination of  Lemmas \ref{lemma:FA4} and \ref{lemma:blowup},
completes the proof of Claim \ref{claim:K23}.
\begin{proof}[Proof of Claim \ref{claim:K23}]
Let $\Gamma$ be a $3$-edge-colored graph on $n$ vertices. Using Lemma~\ref{lemma:blowup},
let $\phi \in \text{Hom}^+(\mathcal A, \mathbb R)$ be a limit homomorphism obtained from $\Gamma$.
By~\eqref{eq:K4flag} and the fact that $\phi$ is an algebra homomorphism,
\begin{align*}
\frac{24K}{n^4} &= \phi\left(\Fuuuu423324\right) \leq \frac 32 \cdot \min\left\{ \left ( \phi\left(\Fuu2 \times \Fuu3 \right) \right),\quad   \left ( \phi\left(\Fuu2 \times \Fuu4 \right) \right),\quad  \left ( \phi\left(\Fuu4 \times \Fuu3 \right) \right) \right\}\\ &= \frac32\cdot \min\left\{ \left(  \frac{2R}{n^2} \cdot \frac{2G}{n^2} \right),\quad  \left(  \frac{2G}{n^2} \cdot \frac{2B}{n^2} \right),\quad  \left(  \frac{2B}{n^2} \cdot \frac{2R}{n^2} \right)\right\},
\end{align*}
yielding the statement of the claim.
\end{proof}

\subsection{Counting proof}

In this section we present
 an alternative proof of Claim~\ref{claim:K23} that does not use flag algebras.
 First we need the following lemma, which closely corresponds to \eqref{eqn:gxb-flags}.

\begin{figure}[h!]
\begin{center}
    \begin{tikzpicture}
        \draw
        (0,0) node[vtx,label=left:$u$](u){}
        (1,0) node[vtx,label=right:$v$](v){}
        (0,1) node[vtx,label=left:$x$](x){}
        (1,1) node[vtx,label=right:$y$](y){}

        (3,0) node[vtx,label=left:$u$](uu){}
        (4,0) node[vtx,label=right:$v$](vv){}
        (3.5,1) node[vtx](xx){}
        (3.5,1) node[right]{$x=y$}

        (9,0) node[vtx](g1){}
        (9,1) node[vtx](g2){}
        (8,0) node[vtx](b1){}
        (8,1) node[vtx](b2){}

        (12,0) node[vtx](gg1){}
        (11.5,1) node[vtx](rr1){}
        (11,0) node[vtx](bb1){}
        ;

        \draw[edge_color4](u)--(v) (uu)--(vv);
        \draw[edge_color2](u)--(x)  (u)--(y) (b1)--node[pos=0.5,left]{$b$}(b2) (uu)--(xx) (bb1)--node[pos=0.5,left]{$b$}(rr1);
        \draw[edge_color3](v)--(x) (v)--(y) (g1)--node[pos=0.5,right]{$g$}(g2) (vv)--(xx) (gg1)--node[pos=0.5,right]{$g$}(rr1);
        \draw[edge_color0](x)--node[pos=0.5]{?}(y)  (b1)--node[pos=0.5]{?}(g1) (b1)--node[pos=0.3]{?}(g2) (g1)--node[pos=0.3]{?}(b2) (b2)--node[pos=0.5]{?}(g2) (gg1)--node[pos=0.5]{?}(bb1);
        \draw (2,-0.5) node{$S$};
        \draw (10,-0.5) node{$S'$};
    \end{tikzpicture}
\end{center}
    \caption{Members of sets $S$ and $S'$ from Lemma~\ref{lem:gxb}.}
    \label{fig:SS'}
\end{figure}

\begin{lemma}
\label{lem:gxb}
    Let $\Gamma$ be a graph whose edges are colored red, green, and blue.
    Let $S$ be the set of ordered vertex tuples $(u,v,x,y) \in V(\Gamma)^4$ for which $uv$ is a red edge,
$ux$ and $uy$ are blue edges, and $vx$ and $vy$ are green edges, as in Figure~\ref{fig:SS'}.
Let $S'$ be the set of pairs $(g,b) \in E(\Gamma)^2$ for which $g$ is an unordered green edge and $b$ is an unordered blue edge.
Then, $|S| \leq |S'|$.
\end{lemma}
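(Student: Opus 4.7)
The plan is to construct an explicit map $\phi : S \to S'$ and show it is injective, from which $|S| \leq |S'|$ follows immediately. Given a tuple $(u,v,x,y) \in S$, I would define
\[
\phi(u,v,x,y) \;=\; \bigl(\{v,x\},\, \{u,y\}\bigr).
\]
Since the definition of $S$ requires $vx$ to be a green edge and $uy$ to be a blue edge, $\phi(u,v,x,y)$ indeed lies in $S'$. Intuitively, out of the two green edges $vx, vy$ and the two blue edges $ux, uy$ available in the configuration, the map picks one green edge and one blue edge in a consistent way using the labels $v$ and $u$ to record the common endpoint with the red edge, and the labels $x, y$ to ``break the symmetry'' that the other two incident edges share.

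To prove injectivity, I would take two tuples $(u,v,x,y)$ and $(u',v',x',y')$ in $S$ with $\phi(u,v,x,y) = \phi(u',v',x',y')$ and examine the four possibilities arising from the equalities of unordered pairs $\{v,x\}=\{v',x'\}$ and $\{u,y\}=\{u',y'\}$. One possibility gives $(u',v',x',y') = (u,v,x,y)$, which is what we want. In each of the three remaining cases (swap within the first pair only, within the second pair only, or within both), the defining conditions of $S$ applied to the new tuple $(u',v',x',y')$ force some edge of $\Gamma$ to simultaneously carry two different colors. For instance, interchanging $u$ with $y$ while keeping $v$ and $x$ fixed would require $v'y' = vu$ to be green, contradicting the fact that $uv$ is red; the other two swaps similarly collide with the original color assignments of $ux$ or $vx$.

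The main obstacle is not deep: it is simply checking that each of the three non-trivial pairings produces such a color conflict, a routine but necessary verification. With injectivity established, the bound $|S| \leq |S'| = GB$ follows, which is exactly the counting reformulation of the flag-algebra inequality~\eqref{eqn:gxb-flags} needed for the elementary proof of Theorem~\ref{triangleent}.
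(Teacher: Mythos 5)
Your proof is correct, and the approach is essentially the same as the paper's: both proofs construct an injection from $S$ to $S'$ by splitting the four-vertex configuration into one green edge and one blue edge. Your map $(u,v,x,y)\mapsto(\{v,x\},\{u,y\})$ differs from the paper's $(u,v,x,y)\mapsto(\{u,x\},\{v,y\})$ only by the involution $x\leftrightarrow y$ on $S$ (and by the order of the output coordinates), so they are interchangeable. The only stylistic difference is in how injectivity is argued: the paper directly reconstructs $(u,v,x,y)$ from a given pair $(g,b)$, splitting into the case where $g$ and $b$ share an endpoint versus the case where they are disjoint, whereas you take two preimages of the same pair and rule out the three nontrivial matchings by exhibiting a color conflict. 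Both are valid and comparably short; your case check is sound once you observe that $v\ne x$ and $u\ne y$ (since $vx$ and $uy$ are edges), so each unordered pair equality splits into exactly two cases. One small slip in your writeup: in the case where $v,x$ are swapped but $u,y$ are not (giving $(u,x,v,y)$), the conflict is that $u'v'=ux$ would need to be red while $ux$ is blue; and in the case of both swaps (giving $(y,x,v,u)$), the conflict is with $vy$ or $ux$, not with $vx$ — the edge $vx$ actually keeps a consistent color under your swaps, so citing ``$ux$ or $vx$'' is not quite right, though the conclusion stands.
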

\begin{proof}
To prove the lemma, we show that the function $f:S \rightarrow S'$ mapping $(u,v,x,y) \mapsto (\{u,x\}, \{v,y\})$ is injective.
Indeed, choose  an arbitrary element $(g,b)$ in the image of $f$. We show that the vertices $u,v,x,y$ can be uniquely determined from $(g,b)$.

    {\bf Case (i):} Suppose that $g$ and $b$ share an endpoint $z$. \\
    Then, the set $\{u,v,x,y\}$ contains at most three distinct vertices and induces three edges.
    Thus, $u$ is the unique vertex incident to a red edge and a blue edge, $v$ is the unique vertex incident to a red edge and a green edge,
    and $z = x = y$. 
    
    {\bf Case (ii):} Suppose that $g$ and $b$ are vertex-disjoint.\\ Then,  $u$ is the endpoint of $b$ with two blue neighbors and one red neighbor among the endpoints of $g$ and $b$. Crucially, $u$ is the unique vertex not having a green edge. Then, $x$ is the endpoint of $b$ distinct from $u$. Having identified $u$ and $x$, $v$ is the red neighbor of $u$, and $y$ is the other blue neighbor of $u$.

As each element of $S'$ in the image of $f$ has  at most one  pre-image in $S$, we have $|S| \leq |S'|$.
\end{proof}

\begin{proof}[Proof of Claim~\ref{claim:K23}]
    Let $K$ be the number of properly colored $K_4$ subgraphs of $\Gamma$.
    Let $\mathcal R$ be the set of ordered vertex pairs $uv \in V(\Gamma)^2$ for which $uv$ induces a red edge.
    For each $uv \in \mathcal R$, let $d_K(uv)$ be the number of ordered pairs $wx \in \mathcal R$ for which $uw$ and $vx$ are green, and  $ux$ and $vw$ are blue.
    Finally, for each $uv \in \mathcal R$, define $d^+(uv)$ as the number of triples $uvw \in V(G)^3$ for which $uw$ is blue and $vw$ is green,    
    and define $d^-(uv)$ as the number of triples $uvw \in V(G)^3$ for which $uw$ is green and $vw$ is blue.
    Observe that for each $uv \in \mathcal R$,  
 \begin{equation}
    \label{equ:d+d-}
         { d_K(uv) \leq d^-(uv)\cdot  d^+(uv) \quad\quad\quad\quad   \text{and}   \quad\quad\quad\quad   \sum_{uv \in \mathcal R} d^-(uv)^2 \ = \  \sum_{uv \in \mathcal R} d^+(uv)^2 }.
    \end{equation}
    Thus, by using the Cauchy-Schwarz inequality,
    \begin{equation}
    \label{eqn:d+d-}
        4K = \sum_{uv \in \mathcal R} d_K(uv) \leq \sum_{uv \in \mathcal R} d^-(uv) \cdot d^+(uv) \leq \sqrt{ \left ( \sum_{uv \in \mathcal R} d^-(uv)^2 \right ) \cdot \left ( \sum_{uv \in \mathcal R} d^+(uv)^2 \right )}= \sum_{uv \in \mathcal R} d^+(uv)^2.
    \end{equation}

Observe that for  an ordered pair $uv \in \mathcal R$,
$d^+(uv)^2$ counts the ordered pairs $xy \in V(\Gamma)^2$
for which
$ux,uy$ are blue edges
and $vx, vy$ are green edges. 
Therefore, $
\sum_{uv \in \mathcal R} d^+(uv)^2$ 
is the number of ordered vertex tuples $(u,v,x,y)$ for which $uv$ is a red edge,
$ux$ and $uy$ are blue edges, and $vx$ and $vy$ are green edges.
Combining \eqref{eqn:d+d-} and Lemma \ref{lem:gxb}, we obtain $4K\leq G B$.

By symmetry, we also have $4K \leq R  B$ and $4K \leq R  G$. Therefore, these three  upper bounds imply
    \[
        4K \leq \min\left\{RG,\text{ } G B,\text{ } R B\right\}.      
    \]
 \end{proof}

\subsection{Entropy proof}
Finally, we provide an entropy version of our proof of Claim~\ref{claim:K23}. 

\begin{proof}[Proof of Claim~\ref{claim:K23}] Our proof shares key ideas with \cite{ChaoHungEntropic}.
    We assume familiarity with the entropy of a discrete random variable $X$, defined as $H(X)=-\sum_{x}\mathbb{P}(X=x)\log_{2}(\mathbb{P}(X=x))$, where $x$ ranges over the outcomes of $X$ that occur with nonzero probability.
    
    Let $(v_1,v_2,v_3,v_4)\in V(\Gamma)^{4}$ be a tuple of vertices sampled uniformly at random from the tuples such that the edges $v_1v_2$, $v_3 v_4$ are red, $v_1 v_3$, $v_2 v_4$ are blue and $v_1 v_4$, $v_2v_3$ are green, as in Figure~\ref{fig:EntropyK4}. From the definition of $(v_1,v_2,v_3,v_4)$, it follows that $H(v_1,v_2,v_3,v_4)=\log_2(4K)$. By the chain rule,
    \begin{equation}\label{eq:entropyK4}
    \begin{split}
    H(v_1,v_2,v_3,v_4)&=H(v_1|v_2,v_3,v_4)+H(v_2,v_3,v_4)= H(v_1|v_2,v_3,v_4)+H(v_2|v_3,v_4)+H(v_3,v_4)\\
    &\leq H(v_1|v_3,v_4)+H(v_2|v_3,v_4)+H(v_3,v_4),
    \end{split}
    \end{equation}
    where the inequality follows from dropping the conditioning on $v_2$. We resample a vertex $v_{1}'$ so that the edges $v_1' v_3$ and $v_1' v_4$ are blue and green, respectively, and the vertices  $v_1$ and $v_1'$ are conditionally independent and identically distributed given $v_3,v_4$, so that $H(v_1'|v_3, v_4)=H(v_1|v_3, v_4)$. Similarly, we resample a vertex $v_2'$
    so that the edges $v_2' v_4$ and $v_2' v_3$ are blue and green, respectively, and
    the vertices  $v_2$ and $v_2'$ are conditionally independent and identically distributed given $v_3,v_4$, so that $H(v_2|v_3,v_4)=H(v_2'|v_3,v_4)$. From \eqref{eq:entropyK4} it follows that
    \begin{align*}
    H(v_1,v_2,v_3,v_4)&\leq \frac 12 \left(H(v_1|v_3,v_4)+H(v_2|v_3,v_4)+H(v_3,v_4)+H(v_1'|v_3,v_4)+H(v_2'|v_3,v_4)+H(v_3,v_4)\right)\\
    = \frac 12& \left(H(v_1,v_1'|v_3, v_4)+H(v_2,v_2'|v_3, v_4)+2H(v_3,v_4)\right)
    = \frac 12 \left(H(v_3,v_4,v_1,v_1')+H(v_3,v_4,v_2,v_2')\right).
    \end{align*}
Notice that $(v_3,v_4,v_1,v_1'),(v_4,v_3,v_2,v_2')\in S$; therefore, $H(v_3,v_4,v_1,v_1'),H(v_3,v_4,v_2,v_2')\leq \log_2(|S|)$. By Lemma~\ref{lem:gxb} we have $\log_2(|S|)\leq \log_2(G)+\log_2(B)$. We conclude
\[
4K=2^{H(v_1,v_2,v_3,v_4)}\leq 2^{\frac 12\left(H(v_3,v_4,v_1,v_1')+H(v_3,v_4,v_2,v_2')\right)}\leq G\cdot B. 
\]
\end{proof}

\begin{figure}[h!]
\begin{center}
    \begin{tikzpicture}
        \draw
        (0,0) node[vtx](a){}
        (0,0) node[left]{$v_4$}

        (2,0) node[vtx](b){}
        (2,0) node[right]{$v_3$}

        (0,2) node[vtx](c){}
        (0,2) node[left]{$v_1$}
        
        (2,2) node[vtx](d){}
        (2,2) node[right]{$v_2$};

        \draw[edge_color4](a)--(b) (c)--(d);
        \draw[edge_color3](a)--(c) (b)--(d);
        \draw[edge_color2](a)--(d) (b)--(c);
    \end{tikzpicture}\quad \quad
    \begin{tikzpicture}
        \draw
        (0,0) node[vtx,label=left:$v_4$](a){}
        (2,0) node[vtx, label=right:$v_3$](b){}
        (0,2) node[vtx, label=left:$v_1$](c){}
        (2,2) node[vtx, label=right:$v_2$](d){}
        (-1,1) node[vtx, label=left:$v_1'$](e){}
        (3,1) node[vtx, label=right:$v_2'$](f){}
        ;

        \draw[edge_color4](a)--(b) (c)--(d);
        \draw[edge_color3](a)--(c) (a)--(e) (b)--(d) (b)--(f);
        \draw[edge_color2](a)--(d) (b)--(e) (b)--(c) (a)--(f);
        \draw[edge_color0](d)--node[pos=0.6]{?}(e) (c)--node[pos=0.5]{?}(e) (d)--node[pos=0.5]{?}(f) (c)--node[pos=0.6]{?}(f) (e)--node[pos=0.4]{?}(f);
    \end{tikzpicture}
    \caption{A drawing of $\Gamma[v_1,v_2,v_3,v_4]$ and the graph obtained by adding the resampled vertices $v_1',v_2'$.}
    \label{fig:EntropyK4}
\end{center}
\end{figure}

\subsection{Proof of stability}

Finally, we prove the second part of Theorem \ref{thm:K4-counting}, which states that a $3$-edge-colored graph achieving  the upper bound in Claim \ref{claim:K23} is obtained from a balanced blowup of a properly colored $K_4$ by adding isolated vertices.
First, we need a lemma.

\begin{lemma}
\label{lem:K4char}
    Let $\Gamma$ be a graph whose edges are colored red, green, and blue. Assume  that the following conditions hold for $\Gamma$:

       (a) For each pair $e_1,e_2$ of  edges of distinct colors $c_1$ and $c_2$, there is an edge of the third color $c_3$ joining $e_1$ and $e_2$.
       
       (b) There exists  $d \geq 1$ such that every $v \in V(\Gamma)$ is incident to exactly $d$ red, $d$ green, and $d$ blue edges.
   
  \noindent  Then,
   $\Gamma$ is  a balanced blowup of a  properly colored $K_4$.
\end{lemma}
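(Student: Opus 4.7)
The plan is to use (a) and (b) to locate a properly $3$-edge-colored $K_4$ inside $\Gamma$, and then to show that this $K_4$ determines a partition of $V(\Gamma)$ into the four blowup parts. First, pick any vertex $v_1$ and use (b) to choose neighbors $v_2,v_3,v_4$ of colors red, green, blue respectively (these must be distinct since the edges have different colors). Applying (a) to each of the three pairs of adjacent edges at $v_1$ (which carry distinct colors) forces the three remaining edges $v_2v_3$, $v_2v_4$, $v_3v_4$ to exist, each with the unique third color; the resulting $K_4$ on $\{v_1,v_2,v_3,v_4\}$ is then properly $3$-edge-colored, with opposite edges of equal color.

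Next, I would define $V_i \subseteq V(\Gamma)$ to be the set of vertices $v$ non-adjacent to $v_i$ and adjacent to each $v_j$ ($j\neq i$) by an edge matching the color of $v_iv_j$. The central task is to show $V_1 \cup V_2 \cup V_3 \cup V_4 = V(\Gamma)$. For $v$ adjacent to some $v_j$---say $vv_1$ is red---applying (a) to the adjacent pairs $(vv_1,v_1v_3)$ and $(vv_1,v_1v_4)$ forces $vv_3$ to be blue and $vv_4$ to be green; then testing each of the three possible colors for a hypothetical edge $vv_2$ and applying (a) at $v_2$ contradicts one of the colors already determined at $v$, so $vv_2$ cannot exist and $v \in V_2$. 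For a hypothetical $v$ non-adjacent to all of $v_1,v_2,v_3,v_4$, choose any neighbor $w$ of $v$ using (b); by the previous case $w$ lies in some $V_j$, hence is adjacent to some $v_a$ by an edge of color different from that of $vw$, and (a) then forces an edge from $v$ to $v_a$, a contradiction.

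The remaining structural checks are short. Each $V_i$ is independent: if $u,v \in V_i$ were adjacent by an edge of color $c$, picking $k \neq i$ with $c(v_iv_k) \neq c$ and applying (a) to $(uv,uv_k)$ forces $vv_k$ to have a color incompatible with $v$'s profile. For $u \in V_i$ and $v \in V_j$ with $i \neq j$, pick $k \notin \{i,j\}$: the edges $uv_k,vv_k$ share $v_k$ with distinct colors $c(v_iv_k)$ and $c(v_jv_k)$, so (a) forces $uv$ to exist with the third color $c(v_lv_k)$, which equals $c(v_iv_j)$ since opposite edges in a properly colored $K_4$ share a color. Finally, each $u \in V_i$ has its red neighbors exactly equal to $V_{j_r}$, where $j_r$ is the unique index with $c(v_iv_{j_r})$ red, so (b) yields $|V_{j_r}|=d$; applying the same argument for each color and each $i$ gives $|V_1|=|V_2|=|V_3|=|V_4|=d$, which is precisely the balanced blowup structure. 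The main obstacle is the partition step: carefully chaining applications of (a) through adjacent edge pairs of distinct colors to rule out every inconsistent color assignment, and in particular to preclude any vertex non-adjacent to all four anchor vertices of the initial $K_4$.
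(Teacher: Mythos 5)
Your approach is genuinely different from the paper's. The paper fixes a single vertex $v$, studies the color neighborhoods $V_R,V_G,V_B$ and the non-neighbors $V_0$, proves connectivity first, and closes the argument by showing $V_0$ equals the green-neighborhood of some $w\in V_G$. You instead construct an explicit properly colored $K_4$ on anchors $\{v_1,v_2,v_3,v_4\}$ and partition $V(\Gamma)$ relative to these anchors. Both routes are valid, and yours produces the four blowup parts and the balance condition quite directly.

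There is, however, a gap in the partition step. For a vertex $v$ non-adjacent to all four anchors, you pick a neighbor $w$ and assert ``by the previous case $w$ lies in some $V_j$.'' But the previous case applies only to vertices adjacent to some $v_i$; a priori $w$ could also be non-adjacent to all four anchors, in which case you cannot conclude $w\in V_j$ and the argument circles. The gap is fillable: apply (a) to $vw$ paired with an edge $e$ of the anchor $K_4$ whose color differs from $c(vw)$; the forced third-color edge joins $\{v,w\}$ to $V(e)\subseteq\{v_1,\dots,v_4\}$, and since $v$ is non-adjacent to every $v_i$, that edge must be incident to $w$. Hence $w$ is adjacent to some anchor, placing it in some $V_j$, and your contradiction then goes through. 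Without this extra application of (a) the partition claim is not justified. (A smaller point: the reduction ``say $vv_1$ is red'' is fine, but deserves a word --- it relies on the automorphism group of the properly colored $K_4$, together with color permutations, acting transitively on pairs (anchor, incident color).)
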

\begin{proof}
First we show that $\Gamma$ is connected.
Indeed, suppose that $V(\Gamma)$ has two disjoint subsets $V_1$ and $V_2$ such that 
$\Gamma[V_1]$ and $\Gamma[V_2]$ are  components of $\Gamma$.
By (b), $\Gamma[V_1]$ contains a green edge $g$, and $\Gamma[V_2]$ contains a blue edge $b$.
Then, by (a), a red edge joins an endpoint of $g$ with an endpoint of $b$, a contradiction.

    Now, fix a vertex $v \in V(\Gamma)$. Write $V_R \subseteq V(\Gamma)$ for the set of vertices of $\Gamma$ that are joined to $v$ by a red edge, and define $V_G$ and $V_B$ similarly.
    Define $V_0 \subseteq V(\Gamma)$ as the set of vertices in $V(\Gamma)$ that are not adjacent to $v$.
    As $v$ has exactly $d$ incident edges of each color, $|V_R| = |V_G| = |V_B| = d$.
    Furthermore, for each permutation $(c_1,c_2,c_3)$ of the three symbols $R,G,B$, (a) implies that each $u \in V_{c_1}$ and each $u' \in V_{c_2}$ are joined by an edge of a color matching the symbol $c_3$.

    Next, fix a vertex $w \in V_G$.
    As $w$ is incident to exactly $d$ edges of each color, (a) and (b) imply that the set of neighbors of $w$ in red is exactly the set $V_B$, and the set of neighbors of $w$ in blue is exactly the set $V_R$.
    Thus, as (a) also implies that every triangle is rainbow, $V_R$ and $V_B$ are independent sets.
    
    Similarly, (a) and (b) imply that for each vertex $w' \in V_B$, the set of red neighbors of $w'$ is exactly $V_G$;
    therefore, (a) implies that $V_G$ is an independent set.
    Now, let $U$ be the set of the $d$ neighbors of $w$ in green.
    As $V_G$ is an independent set, $U \subseteq V_0$.
    Furthermore,
    by (a), for each vertex $x \in U$,
    $x$ is a red neighbor of each vertex of $V_R$, and $x$ is a blue neighbor of each vertex of $V_B$, 
    and hence (a) also implies that $x$ is a green neighbor of each vertex of $V_G$.
    Thus, using (b), we obtain that $N_{\Gamma}(v')\subseteq V_{R}\cup V_{G}\cup V_{B}\cup U$ for every vertex $v'\in V_{R}\cup V_{G}\cup V_{B}\cup U$. Since $\Gamma$ is connected, it follows $U=V_0$.
    
    Therefore, $\Gamma$ is obtained from a properly colored $K_4$ by blowing up each vertex to one of the independent sets $V_R,V_G,V_B,V_0$, each of which contains exactly $d$ vertices.
\end{proof}

Now, we are ready to prove the rest of Theorem \ref{thm:K4-counting}.
We show that if $K = \frac 14 (RGB)^{2/3} > 0$, then $\Gamma$ is obtained from a balanced blowup of a properly colored $K_4$ by adding a set of isolated vertices.

If one of
the expressions $4K\leq R\cdot G$, $4K\leq R\cdot B$ and $4K\leq B\cdot G$ is a strict inequality, then it follows $(4K)^{3}<(R\cdot G\cdot B)^{2}$. Therefore, if $K=\frac 14 (R\cdot G\cdot B)^{2/3}$, then the inequality in Lemma~\ref{lem:gxb}, as well as the inequalities obtained from it by permuting colors, are tight.

We write $V' \subseteq V(\Gamma)$ for the set of vertices
with at least one incident edge.     We observe that each edge of $\Gamma$ belongs to a properly colored $K_4$. Indeed, if $e$ does not belong to a properly colored $K_4$, then $e$ could be deleted without reducing the number of properly colored $K_4$ graphs. 
    Assuming without loss of generality that $e$ is red,
    $
    T = \frac 14 (RGB)^{2/3} > \frac 14 ( (R-1)GB)^{2/3},$
    contradicting Claim \ref{claim:K23}.
    Therefore, in particular, every vertex of $V'$ is incident to an edge of each color.

 We show that if each inequality in the counting proof of Claim \ref{claim:K23} is tight, then conditions  (a) and (b) of Lemma \ref{lem:K4char} hold  for $\Gamma[V']$,
    implying that $\Gamma$ is obtained from a balanced blowup of a properly colored $K_4$ by adding a set of isolated vertices.

    First, we analyze the inequality given by Lemma \ref{lem:gxb}.
    If $|S| < |S'|$ in the lemma, 
    then the inequality 
    $
\sum_{uv \in \mathcal R} d^+(uv)^2 \leq 
G \cdot B$ in Theorem \ref{triangleent} is strict, a contradiction; therefore, $|S| = |S'|$.
Hence, for every green edge $g$ of $G$ and every blue edge $b$ of $G$, the endpoints of $g$ and $b$ form a set $\{u,v,x,y\}$ for which $uv$ is a red edge, $ux$ and $uy$ are blue edges, and $vx$ and $vy$ are green edges.
In particular, a red edge joins an endpoint of $g$ with an endpoint of $b$.
By repeating the argument with permuted colors, we observe that the condition (a) of Lemma \ref{lem:K4char} holds for $\Gamma[V']$.
Together with the fact that each $v \in V'$ is incident to an edge of each color, this implies that $\Gamma[V']$ is connected.

Next,  we claim that condition (b) of Lemma \ref{lem:K4char} holds for $\Gamma[V']$.
\begin{claim}
    There is a  $d \geq 1$ such that every $v \in V'$ is incident to exactly $d$ red, $d$ green, and $d$ blue edges.
\end{claim}
\begin{proof}
We analyze the Cauchy-Schwarz inequality 
\[
\sum_{uv \in \mathcal R} d^-(uv) \cdot d^+(uv) \leq \sqrt{ \left ( \sum_{uv \in \mathcal R} d^-(uv)^2 \right ) \cdot \left ( \sum_{uv \in \mathcal R} d^+(uv)^2 \right )}
\]
in \eqref{eqn:d+d-}.
The Cauchy-Schwarz inequality is an equality if and only if $d^+(uv) = d^-(uv)$ for each pair $uv \in \mathcal R$; therefore, we assume that $d^+(uv) = d^-(uv)$ for each $uv \in \mathcal R$.
Furthermore,
by condition (a) of Lemma \ref{lem:K4char},
$d_G(u) = d^-(uv) = d^+(uv) = d_B(u)$,
where $d_G(u)$ and $d_B(u)$ denote the number of incident green edges and incident blue edges to $u$, respectively.
By permuting colors, 
and recalling that each $u \in V'$ is incident to an edge of each color,
\begin{equation}
\label{eqn:local-balance}
    d_R(u) = d_G(u) = d_B(u) \text{ for each $u \in V'$},
\end{equation}
where $d_R(u)$ is the number of red edges incident to $u$.
Next, by condition (a) of Lemma \ref{lem:K4char}, 
for each $uv  \in \mathcal R$ and by
\eqref{eqn:local-balance},
$d_G(u) = d^-(uv) = d_B(v) = d_G(v)$.
Together with \eqref{eqn:local-balance} and the fact that $\Gamma[V']$ is connected,
condition (b)  of Lemma \ref{lem:K4char}
holds for $\Gamma[V']$.
\end{proof}
As both conditions (a) and (b) of Lemma \ref{lem:K4char} hold,
$\Gamma[V']$ is a balanced blowup of a properly colored $K_4$, completing the proof of Theorem \ref{thm:K4-counting}.

\section{Proof of Theorem~\ref{thm:fixedrainbowKk}}\label{sec:fixedrainbowK4}
We begin by establishing some notation.
Suppose that $\sigma$ is a type.
Recall that a $\sigma$-flag is formally a pair $(G,\theta)$, where $\theta$ is a model embedding of $\sigma$ in a model $G$ (in our case, an injective labeling of $|\sigma|$ vertices of a graph $G$ that induces a labeled subgraph isomorphic to $\sigma$).
We write $0$ for the type of order zero. 
Thus, a flag of type $0$ is formally a model $G$ (in our case, a graph), paired with the empty injection $\theta: \emptyset \hookrightarrow V(G)$. In other words, no vertex of $G$ is labeled. Thus, we often identify an unlabeled graph $G$ with its corresponding flag of type $0$.

Given a flag $(G,\theta)$ of some type $\sigma$, we write $(G,\theta)|_0 = G$; in other words,
we let $(\cdot) |_0$ be an operator that removes labels from a flag and hence returns a flag of type $0$.
For each type $\sigma$, we extend $(\cdot) |_0$ to act linearly on the algebra $\mathcal A^{\sigma}$ of $\sigma$-flags.

We also establish a version of H\"older's inequality for flags. 
The proof is  nearly identical to the proof of the Cauchy-Schwarz inequality from \cite[Theorem 3.14]{RazborovFlag2007}. We begin with the following form of H\"older's inequality:
\begin{lemma} [{\cite[Problem~6.31]{Folland}}]\label{lem:Holder}
    Let $S$ be a measure space, and let $p_1, \dots, p_{\ell} > 0$ be values whose reciprocals sum to $1$. For all measurable functions $f_1, \dots, f_{\ell}: S \rightarrow \mathbb R$, we have 
    \[
        \left \| \prod_{i=1}^{\ell} f_i \right \|_1 \leq \prod_{i=1}^{\ell} \|f_i\|_{p_i}.
    \]
\end{lemma}

\begin{lemma}
 If $F_1,\dots,F_{\ell}$ are flags of a common type $\sigma$, then
    \[
        \llbracket F_1 \times \ldots \times F_{\ell} \rrbracket \leq \left ( \llbracket F_1^{\ell} \rrbracket \cdot\ldots\cdot  \llbracket F_{\ell}^{\ell} \rrbracket \right )^{1/\ell}.
    \]
\end{lemma} 
\begin{proof}
Let $\mathcal A$ be the flag algebra associated with the type $0$.
To prove the lemma, we must show that for every $\phi \in \text{Hom}^+(\mathcal A, \mathbb R) $, 
\[
\phi \left (  \llbracket F_1 \times \cdots \times F_{\ell} \rrbracket \right ) ^{\ell} \leq \phi \left (\llbracket F_1^{\ell} \rrbracket \right ) \cdot\ldots\cdot  \phi \left (\llbracket F_{\ell}^{\ell} \rrbracket \right ).
\]
Fix an element $\phi \in \text{Hom}^+(\mathcal A, \mathbb R)$, 
and let $\boldsymbol{\phi}^{\boldsymbol{\sigma}}$ be the unique random homomorphism rooted at $\phi$ associated with $\sigma$ (see \cite[Theorem 5, Definition 10]{RazborovFlag2007}).
Then, we must prove that 
\[
\mathbb E[ \boldsymbol{\phi}^{\boldsymbol{\sigma}}(F_1) \cdot\ldots\cdot \boldsymbol{\phi}^{\boldsymbol{\sigma}}(F_{\ell})]^{\ell}
\leq \mathbb E [\boldsymbol{\phi}^{\boldsymbol{\sigma}}(F_1)^{\ell} ] \cdot\ldots\cdot \mathbb E[\boldsymbol{\phi}^{\boldsymbol{\sigma}}(F_{\ell} )^{\ell}] .
\]
This inequality follows immediately from Lemma \ref{lem:Holder} with $p_1 = \ldots = p_{\ell} = \ell$ by considering each random variable 
$\boldsymbol{\phi}^{\boldsymbol{\sigma}}(F_i)$ as a measurable function with respect to the probability measure.
\end{proof}

\begin{proof}[Proof of Theorem \ref{thm:fixedrainbowKk}]
%
%
%
For each $i \in \{1, \dots, \binom k2\}$, let $e_i$ be the unlabeled flag on $2$ vertices consisting of an edge of color $i$. Let $F$ be a complete graph on vertices $v_1, \dots, v_k$
with a fixed rainbow edge coloring. Without loss of generality, let each edge $v_i v_j$ with $i < j$ be colored with the color $(i-1)k - \binom i2 + (j-i)$. 
In other words, the edges of $F$ are colored with the set $\{1, \dots, \binom k2\}$ in the lexicographic order.
Let $\Gamma$ be an $n$-vertex graph with $C_i$ edges of color $i$ and $K$ copies of $F$.
As in the proof of Lemma \ref{lemma:blowup}, we will eventually consider the convergent sequence of blowups of $\Gamma$ whose limit is $\phi \in \text{Hom}^+(\mathcal A, \mathbb R)$. 
Thus, we may imagine that we have a large blowup graph $\Gamma_{\ell}$ on $N = \ell|V(\Gamma)|$ vertices
in which the density of edges of color $i$ satisfies $\frac{2C_i\ell^2}{N^2} \to \phi(e_i)$.
Similarly, by identifying the graph $F$ with its corresponding $k$-vertex flag, we may imagine that the density of $F$ in $\Gamma_{\ell}$ satisfies $\frac{k! K\ell^k}{N^k} \to \phi(F)$. 


We show by induction on $k$ that
\[
F \leq k! 2^{-k/2}  \left ( \prod_{i=1}^{\binom k2} e_i
\right )^{\frac{1}{k-1}}
\]
for each $k \geq 4$. 

First, for $k  = 4$, 
we write $(e_1, \dots, e_6) = \left ( \Fuu2, \Fuu3, \Fuu4, \Fuu5, \Fuu6, \Fuu7 \right )$.
An application of the Cauchy-Schwarz inequality and a $6$-color analogue of \eqref{eqn:gxb-flags} give
\begin{align*}
\Fn40{2 3 5 4 6 7}  
&=
24\cdot\left\llbracket \frac{1}{2} \Fn42{2 3 5 4 6 7} \right\rrbracket
\leq 
24\cdot\left\llbracket \Fn32{2 5 6}\times 
\Fn32{2 3 4} \right\rrbracket
\leq
24\cdot\sqrt{
\left\llbracket \Fn32{2 5 6}^2\right\rrbracket\cdot 
\left\llbracket\Fn32{2 3 4}^2\right\rrbracket
}\\
&\leq 24\cdot\sqrt{\frac{1}{4}\left(\Fn205 \times \Fn206\right)\cdot\frac{1}{4}\left(\Fn203\times\Fn204\right)}
= 6 \left(\Fn206\times\Fn205\times\Fn203\times\Fn204\right)^{1/2}.
\end{align*}
Then, by repeating the argument with an edge type $\Fll3$ of color $2$, and then again with an edge type $\Fll4$ of color $3$, we get
\begin{align}\label{eq:rainbowK4}
\Fn40{2 3 5 4 6 7} = \left(\Fn40{2 3 5 4 6 7}\right)^{1/3} \times\left( \Fn40{2 3 5 4 6 7}\right)^{1/3} \times \left(\Fn40{2 3 5 4 6 7}\right)^{1/3}
\leq
6\left( \Fn202 \times \Fn203 \times \Fn204 \times \Fn205 \times \Fn206 \times \Fn207  \right)^{1/3}.
\end{align}
This proves the claim for $k=4$.

Next, we assume that $k \geq 5$.
Let $\sigma$ be the type obtained from $F - v_1$ via the bijection $\{1, \dots, k-1\} \rightarrow  \{v_2, \dots, v_k\}$ that maps $i \mapsto v_{i+1}$, and similarly, let $F_{\sigma}$ be the $k$-vertex flag of type $\sigma$ obtained from $F$ via the injection $\{1, \dots, k-1\} \hookrightarrow \{v_2, \dots, v_k\}$ that maps $i \mapsto v_{i+1}$.
We observe that 
\[
F = k! \llbracket F_{\sigma} \rrbracket = k! \llbracket F_{\sigma} \times \sigma^{k-2} \rrbracket.
\]
Using Lemma \ref{lem:Holder} with $\ell = k-1$,
$F_1 = F_{\sigma}$ and $F_2 =  \ldots  =F_{k-1} = \sigma$,       
\begin{equation}
    \label{eqn:Holder}
F \leq k! \left ( \llbracket F_{\sigma}^{k-1} \rrbracket
\llbracket \sigma^{k-1} \rrbracket^{k-2} \right )^{1/(k-1)}
=k! \left ( \llbracket F_{\sigma}^{k-1} \rrbracket\llbracket \sigma \rrbracket^{k-2} \right )^{1/(k-1)}.
\end{equation}

We would like to represent the right-hand side of \eqref{eqn:Holder} using unlabeled flags of the type $0$.
As only one bijection $\{1, \dots, k-1\} \rightarrow V(\sigma|_0)$
gives a flag isomorphic to $\sigma$,
\begin{equation}
    \label{eqn:sigma-avg}
\llbracket \sigma \rrbracket = \frac{1}{(k-1)!} \sigma|_0.
\end{equation}
Next, we recall that $F_{\sigma}^{k-1}$ has a unique representation as a linear combination of $\sigma$-flags on $2k-2$ vertices.
Notice that $2k-2$ is the smallest possible value as $|\sigma| + (k-1) (|F_{\sigma}| - |\sigma|) = (k-1) + (k-1) = 2k-2$.
Furthermore, for each $\sigma$-flag $G_{\sigma}$ that appears in this linear combination with a non-zero coefficient,
each labeled vertex of $G_{\sigma}$ is incident to $k-1$ edges of the same color.
Given the unlabeled flag $(G_{\sigma})|_0$,
only one injection $\{1, \dots, k-1\} \hookrightarrow V(G)$
gives a labeled subgraph of $(G_{\sigma})|_0$ isomorphic to $\sigma$ in which every labeled vertex is isomorphic to $k-1$ edges of the same color; therefore,
\begin{equation}
\label{eqn:Fk-1}
\llbracket F_{\sigma}^{k-1} \rrbracket = \frac{1}{(2k-2)(2k-3) \cdots k} (F_{\sigma}^{k-1})|_0 = \frac{(k-1)!}{(2k-2)!} (F_{\sigma}^{k-1})|_0 .
\end{equation}

Next, consider an unlabeled graph $G$
that appears with a non-zero coefficient in the unique linear combination of $(2k-2)$-vertex graphs that represents
$(F_{\sigma}^{k-1})|_0$.
Let $B$ be the $K_{k-1,k-1}$ subgraph of $G$ with one partite set $B_1$ corresponding to the vertex subset $\{v_2, \dots, v_k\}$ of $V(F_{\sigma})$, and with the other partite set $B_2$ corresponding to copies of $v_1$.
Note that $B$ is the union of $k-1$ monochromatic $K_{1,k-1}$ star graphs with colors $1, \dots, k-1$.
We claim that the coefficient of $G$ in the expansion of $e_1 \times \cdots \times e_{k-1}$ is  $\frac{2^{k-1}(k-1)!}{(2k-2)!}$.
To this end, we count the number of ordered matchings $M = (a_1, \dots, a_{k-1})$ in $G$ for which each edge $a_i$ has color $i$.
As each edge of $G$ corresponding to an edge $v_i v_j$ of $F$ with $i,j \geq 2$ has a color of at least $k$,
it follows that each edge of $M$ has exactly one endpoint in $B_1$ and one endpoint in $B_2$.
Thus, 
as $B$ is a union of monochromatic copies of $K_{1,k-1}$,
we have $k-1$ choices for $a_1$, $k-2$ choices for $a_2$, and so on. Therefore, the number of valid matchings $M$ is exactly $(k-1)!$. As $G$ has exactly $\frac{(2k-2)!}{2^k}$ perfect matchings, it follows that $[G](e_1 \times \cdots \times e_{k-1}) = \frac{2^{k-1}(k-1)!}{(2k-2)!}$. Therefore,
\[
(F_{\sigma}^
{k-1})|_0 \leq \frac{(2k-2)!}{2^{k-1}(k-1)!}(e_1 \times \ldots \times e_{k-1} ).
\]
Thus, continuing from \eqref{eqn:Fk-1}, 
\begin{equation}
\label{eqn:Fk-10}
\llbracket F_{\sigma}^{k-1} \rrbracket \leq \frac{(k-1)!}{(2k-2)!} \cdot \frac{(2k-2)!}{2^{k-1}(k-1)!} (e_1 \times \ldots \times e_{k-1}) = 
2^{1-k} (e_1 \times \ldots \times e_{k-1})
\end{equation}
Combining \eqref{eqn:Holder}, \eqref{eqn:sigma-avg}, and \eqref{eqn:Fk-10},
\begin{equation}
\label{eqn:pre-induction}
F \leq \frac 12 k! \left (
\frac{1}{(k-1)!} \sigma|_0
\right )^{\frac{k-2}{k-1}} (e_1 \times \ldots \times e_{k-1})^{1/(k-1)}
\end{equation}
Finally, as $\sigma|_0$ is a rainbow edge-colored copy of $K_{k-1}$, the induction hypothesis tells us that 
\[
\sigma|_0 \leq  2^{-\frac{k-1}{2}} (k-1)! \prod_{i=k}^{\binom k2} e_i^{1/(k-2)}.
\]
Putting this together with \eqref{eqn:pre-induction},
\begin{equation}
    \label{eqn:flag-k}
F \leq \frac{k!}{2} \cdot 2^{-\frac{k-2}{2}} \prod_{i=1}^{\binom k2} e_i^{1/(k-1)} = k! 2^{-k/2} \left ( \prod_{i=1}^{\binom k2} e_i \right )^{1/(k-1)}.
\end{equation}

To finish the proof, we use Lemma~\ref{lemma:blowup}.
Let $\Gamma$ be a graph, and
let $\phi \in \text{Hom}^+(\mathcal A, \mathbb R)$ be the limit homomorphism obtained from Lemma \ref{lemma:blowup}.
If $\Gamma$ has $C_i$ edges of color $i$ for $i \in \{1,\ldots,\binom k2\}$ and $K$ copies of a fixed rainbow coloring $F$ of $K_k$, then 
\[
\phi(e_i) = \frac{2C_i}{n^2} \quad\quad\text{ and }\quad\quad \phi\left( F \right) = \frac{k!K}{n^k}.
\]
Combining this with 
 \eqref{eqn:flag-k}, we get
\begin{align*}
\frac{k! K}{n^k} &\leq k! 2^{-k/2} \left(  \prod_{i=1}^{\binom k2}  \frac{2C_i}{n^2} \right)^{1/(k-1)},
\end{align*}
which simplifies to $K^{k-1}
\leq \prod_{i=1}^{\binom k2} C_i$.

\end{proof}

\section{Proofs of Theorem~\ref {triangleent} via flag algebras, counting and entropy}

\label{sec:thm11}

In this section, we give three short proofs of Theorem \ref{triangleent}, using flag algebras, counting, and entropy.
We also prove Theorem \ref{thm:K3-stability}, which shows uniqueness of the extremal construction for Theorem \ref{thm:K3-stability}.

\begin{proof}[Flag-algebra proof of Theorem \ref{triangleent}]

By the Cauchy-Schwarz inequality,
\begin{eqnarray}
\label{eqn:tleq3sqrt}
\Fn30{4 2 3} = 6 \cdot \left\llbracket \Fllu 423  \right\rrbracket= 6 \cdot\left\llbracket \Fllu423 \times \Fll4 \right\rrbracket \leq 6\cdot \sqrt{\left\llbracket \left(\Fllu423\right)^2 \right\rrbracket} \cdot \sqrt{\left\llbracket \left(\Fll4\right)^2 \right\rrbracket }  
\leq 3 \cdot \sqrt{ \Fuu4 \times \Fuu3 \times \Fuu2}.
\end{eqnarray}
Now, let $\Gamma$ be a $3$-edge-colored graph on $n$ vertices. 
Let $\phi$ be the limit homomorphism obtained from $\Gamma$ using
Lemma~\ref{lemma:blowup}. Then, as $\phi$ is an algebra homomorphism, Lemma \ref{lemma:blowup} implies that
\begin{equation}
 \frac{6T}{n^3}  = \phi\left(\Fn30{4 2 3}\right) \leq 3\cdot \sqrt{ \phi\left(\Fuu4 \times \Fuu3 \times \Fuu2\right)} = 3\cdot\sqrt{  \frac{2R}{n^2} \cdot \frac{2G}{n^2} \cdot \frac{2B}{n^2}  },
\end{equation}
from which we conclude $T\  \leq\  \sqrt{2RGB}. $
\end{proof}

\begin{proof}[Counting proof of Theorem \ref{triangleent}]
Our argument is essentially an interpretation of \eqref{eqn:tleq3sqrt}.
Let $\Gamma$ be a graph whose edges are colored red, green, and blue, having $R$ red, $G$ green, and $B$ blue edges.
Let $\mathcal R \subseteq V(\Gamma)^2$ be the set of ordered pairs $uv \in V(\Gamma)^2$ that induce a red edge.
Note that $|\mathcal R| = 2R$.
For each $uv \in \mathcal R$, let $d^+(uv)$
denote
the number of 
triples $uvw \in V(\Gamma)^3$
for which 
$uw$ is blue and $vw$ is green. 
Then, applying the Cauchy-Schwarz inequality,
the number of rainbow triangles in $\Gamma$ is
\begin{equation}
\label{eqn:K3}
T = \sum_{uv \in \mathcal R} d^+(uv) \cdot 1 \le
\sqrt{\sum_{uv \in \mathcal R} d^+(uv)^2}\cdot 
\sqrt{2R}.
\end{equation}
As observed in the proof of Theorem \ref{thm:K4-counting}, $\sum_{uv \in \mathcal R} d^+(uv)^2$ is the number of ordered vertex tuples $(u,v,x,y)$ for which $uv$ is a red edge, $ux$ and $uy$ are blue edges, and $vx$ and $vy$ are green edges. By Lemma~\ref{lem:gxb}, $\sum_{uv \in \mathcal R} d^+(uv)^2 \leq G \cdot B$. 
Putting this together with~\eqref{eqn:K3}, we conclude 
$T \leq \sqrt{2 RGB}$.
\end{proof}

\begin{proof}[Entropy proof of Theorem~\ref{triangleent}]
    Let $\Gamma$ be a graph whose edges are colored red, green, and blue.
    Let $v_g v_b v_r\in V(\Gamma)^{3}$ be a rainbow triangle sampled uniformly at random from $\Gamma$, where $v_g v_b$, $v_b v_r$  and $v_r v_g$ are red, green and blue edges respectively. We resample a vertex $v_{r}'$ so that $v_g v_r'$ and $v_b v_r'$ are blue and green edges, and  $v_r$ and $v_r'$ are conditionally independent and identically distributed given $v_g,v_b$, so that $H(v_r'|v_g, v_b)=H(v_r|v_g, v_b)$. 

    By the chain rule, we have $H(v_g,v_b,v_r)=H(v_r|v_g,v_b)+H(v_g,v_b)$ and using $H(v_r'|v_g,v_b)=H(v_r|v_g,v_b)$, it follows $H(v_g,v_b,v_r)=H(v_r'|v_g,v_b)+H(v_g,v_b)$. By adding these two equations and using conditional independence, we obtain
    \begin{align*}
    2H(v_g,v_b,v_r)&=H(v_r|v_g, v_b)+H(v_g,v_b)+H(v_r'|v_g,v_b)+H(v_g,v_b)=H(v_r,v_r'|v_g,v_b)+2H(v_g,v_b)\\
    &=H(v_g,v_b,v_r,v_r')+H(v_g,v_b).
    \end{align*}
Since $(v_g,v_b)$ is an ordered red edge, it follows $H(v_g,v_b)\leq \log_{2}(2R)$. 
Recall that $S$ is the set of ordered vertex tuples $(u,v,x,y) \in V(\Gamma)^4$ for which $uv$ is a red edge,
$ux$ and $uy$ are blue edges, and $vx$ and $vy$ are green edges,
see Figure~\ref{fig:SS'}.
Notice that $(v_g,v_b,v_r,v_r')$ belongs 
to the set
$ S$ defined in Lemma \ref{lem:gxb}; therefore, by Lemma \ref{lem:gxb}, $H(v_g,v_b,v_r,v_r')\leq \log_{2}(|S|) \leq \log_2(|S'|) = \log_{2}(G)+\log_2(B)$. We conclude 
\[
T=2^{H(v_g,v_b,v_r)}\leq 2^{\frac 12 H(v_g,v_b,v_r,v_r')+\frac 12 H(v_g,v_b)}\leq \sqrt{2RGB}.
\] 
\end{proof}

Using our counting proof of Theorem \ref{triangleent}, 
we can in fact show uniqueness of the graph for which the upper bound on the number of rainbow triangles is attained.

\begin{proof}[Proof of Theorem~\ref{thm:K3-stability}]
     Let  $\Gamma$ be a graph whose edges are colored red, green, and blue, with exactly $T = \sqrt{2 RGB} > 0$ rainbow triangles. 
    We aim to show that $\Gamma$ is obtained from a balanced blowup of a properly colored $K_4$, by possibly adding a set
of isolated vertices. 
    Write $V' \subseteq V(\Gamma)$ for the set of vertices in $\Gamma$ incident to at least one edge.
    We also observe that each edge of $\Gamma$ belongs to a rainbow triangle. Indeed, if $e$ does not belong to a rainbow triangle, then $e$ could be deleted without reducing the number of rainbow triangles. 
    Assuming without loss of generality that $e$ is red,
    $
    T = \sqrt{2 RGB} > \sqrt{2 (R-1)GB},$
    contradicting Theorem~\ref{triangleent}.

    As Theorem \ref{triangleent} states that
    $T \leq \sqrt{2 RGB}$, the fact that $T = \sqrt{2 RGB}$ implies that each inequality in the counting proof of Theorem \ref{triangleent} is an equality.
    We show that if each inequality in the proof of Theorem \ref{triangleent} holds, then conditions  (a) and (b) of Lemma \ref{lem:K4char} hold  for $\Gamma[V']$,
    implying that $\Gamma$ is obtained from a balanced blowup of a properly colored $K_4$ by adding a set of isolated vertices.

    First, we analyze the inequality 
    $\sum_{uv \in \mathcal R} d^+(uv)^2 \leq G \cdot B$ 
    given by Lemma \ref{lem:gxb}.
    If $|S| < |S'|$ in the lemma, 
    then the inequality 
    $
\sum_{uv \in \mathcal R} d^+(uv)^2 \leq 
G \cdot B$ is strict, a contradiction; therefore, $|S| = |S'|$.
Hence, for every green edge $g$ of $G$ and every blue edge $b$ of $G$, the endpoints of $g$ and $b$ form a set $\{u,v,x,y\}$ for which $uv$ is a red edge, $ux$ and $uy$ are blue edges, and $vx$ and $vy$ are green edges.
In particular, a red edge joins an endpoint of $g$ with an endpoint of $b$.
By repeating the argument with permuted colors, we observe that the condition (a) of Lemma \ref{lem:K4char} holds for $\Gamma[V']$.

Next,  we claim that condition (b) of Lemma \ref{lem:K4char} holds for $\Gamma[V']$.
\begin{claim}
    There is a  $d \geq 1$ such that every $v \in V'$ is incident to exactly $d$ red, $d$ green, and $d$ blue edges.
\end{claim}
\begin{proof}
We analyze the Cauchy-Schwarz inequality $\sum_{uv \in \mathcal R} d^+(uv) \cdot 1 \le
\sqrt{\sum_{uv \in \mathcal R} d^+(uv)^2}\cdot 
\sqrt{2R}$.
The Cauchy-Schwarz inequality is an equality if and only if the values $d^+(uv)$ are equal for all pairs $uv \in \mathcal R$.
By repeating the argument with permuted colors, and recalling that each edge of $\Gamma$ belongs to at least one rainbow triangle, we conclude that there exists a  $d \geq 1$
such that the following holds for every permutation $(c_1,c_2,c_3)$ of the three colors: 
\begin{enumerate}
\item[$(\star)$]
If $u,v \in V(\Gamma)$ induce an edge of color $c_1$, then there are exactly $d$ vertices $w$ for which $u,w$ induce an edge of color $c_2$ and $v,w$ induce an edge of color $c_3$.
\end{enumerate}

Now,  consider a vertex $u \in V'$. As $u \in V'$, $u$ has a neighbor $u'$, and without loss of generality, $uu'$ is red. Then, by ($\star$), $u$ belongs to a rainbow triangle in which $u$ is incident to a green edge, as well as a rainbow triangle in which $u$ is incident to a blue edge. Therefore, $u$ is incident to at least one edge of each color.
    
    Next, let $(c_1,c_2,c_3)$ be an arbitrary permutation of the three colors, and let $v$ be a vertex for which $uv$ has color $c_1$. By $(\star)$, there exist at least $d$ vertices $w$ for which $uw$ has color $c_2$. Furthermore, for each vertex $w \in V(\Gamma)$ for which $uw$ has color $c_2$, condition (a) of Lemma \ref{lem:K4char}
    implies that $vw$ has color $c_3$. Therefore, $V(\Gamma)$ has exactly $d$ vertices $w$ for which $uw$ has color $c_2$. Since $u$ and $c_2$ were chosen arbitrarily, $u$ has exactly $d$ incident edges of each color.
\end{proof}
Since conditions (a) and (b) of Lemma \ref{lem:K4char} hold, $\Gamma[V']$ is a balanced blowup of a properly colored $K_4$. Therefore,  $\Gamma$ is obtained from a balanced blowup of a properly colored $K_4$ by possibly adding a set of isolated vertices.
\end{proof}

\section{Concluding Remarks}\label{sec:remarks}

We note that an argument that is similar
to but more tedious than  the proof of Theorem \ref{thm:K3-stability}
shows that 
for each $\varepsilon > 0$, there exists a  $\delta > 0$ such that when $T^2 \geq (\sqrt 2 - \delta) \sqrt{RGB}$, then $\Gamma$ can be transformed into a balanced blowup of a properly colored $K_4$ by changing at most $\varepsilon |V(\Gamma)|^2$ vertex pairs.
A precise version of \eqref{eqn:gxb-flags} states that
\begin{equation}
\label{eqn:gxb-flagsB}
\Fuu3 \times \Fuu2 =  4\cdot\left \llbracket \Fllu423 ^2 \right \rrbracket
+ \frac 16  \cdot  \Fuuuu112311 + \frac 16 \cdot  \Fuuuu112314 + 
\frac 13 \cdot  \Fuuuu212313 + \frac 12 \cdot \Fuuuu223233 +  \frac 12\cdot \Fuuuu332322 + \ldots.
\end{equation}
Thus, if $T^2 \geq  (\sqrt 2 - \delta) \sqrt{RGB}$,
then
the unlabeled graphs on the right-hand
side of \eqref{eqn:gxb-flagsB} have
density at most, say, $100\delta$.
Then, when $\delta$ is sufficiently small, an induced removal lemma (see for example~\cite{aroskar2014limitsregularityremovalfinite})
implies that $\Gamma$ can be transformed into an edge-colored graph $\Gamma'$ that contains none of the  unlabeled graphs on the right-hand side of  \eqref{eqn:gxb-flagsB}, or a color permutation thereof,
by changing at most $\frac 12 \varepsilon|V(\Gamma)|^2$ vertex pairs.
These forbidden induced subgraphs imply that $\Gamma'$ is obtained from a blowup of a properly colored $K_4$ by adding isolated vertices.
Furthermore, when $\delta$ is sufficiently small, the bound 
$T^2 \geq (\sqrt 2 - \delta) \sqrt{RGB}$
implies that $\Gamma'$ is almost balanced and hence it  can be made balanced by editing at most another $\frac 12 \varepsilon|V(\Gamma)|^2$ edges.
We decided not to present the proof, because it is tedious and not particularly enlightening.
The proof of Theorem~\ref{thm:K3-stability} uses condition (a) of Lemma \ref{lem:K4char}, which is obtained from lower-order terms and cannot be obtained in a straightforward way from the flag algebra calculation.

\section{Acknowledgments}
The authors are grateful to Bowen Li, Robert Krueger and Felix Clemen for useful discussions on the problems. In particular, Bowen Li provided a construction for Question~\ref{Q2}, which was better than the construction presented in the first version of our paper. This motivated us to involve AlphaEvolve for searching for a better construction, which is depicted in Figure~\ref{fig:Bowen}.
Experimental computer calculations were performed mainly on the Alderaan cluster.
The Alderaan cluster was funded by the NSF Campus Computing program grant OAC-2019089 and a contribution from the Simons Foundation.
It is operated by the Center for Computational Mathematics, Department of Mathematical and Statistical Sciences, College of Liberal Arts and Sciences at University of Colorado, Denver.

\bibliographystyle{plainurl}
\bibliography{references}


\ifjournalversion
\end{document}
\fi

\appendix

\section{Flag Algebras}\label{section:fa}
The purpose of this section is to introduce the main definitions for flag algebras used in this paper, 
however, we are not attempting to give a complete introduction to flag algebras.
It should allow the reader to follow the calculations and make the paper self-contained.
An interested reader may look at~\cite{fass,Lidicky2021,brosch2024gettingrootproblemsums,connor,silva2016flagalgebrasglance,andrzej}. 

To simplify the notation, denote the number of vertices of a graph $G$ by $v(G)$.
Denote all graphs on $n$ vertices up to isomorphism by $\mathcal{F}_n$ 
and the union of them by $\mathcal{F}$.
The \emph{density} of a graph $G$ in a graph $H$ is
\[
p(G,H)= \frac{|\{ X : X\subseteq V(H), H[X] \cong G \}|}{\binom{v(H)}{v(G)}}.
\]
A sequence of graphs $(G_n)_{n\geq1}$ of increasing orders is \emph{convergent} if for every $H \in \mathcal{F}$, the density of $H$ in  $(G_n)_{n\geq1}$ converges, i.e., 
$\lim_{n\to\infty} p(H,G_n)$ exists.
Examples of convergent sequences are a sequence of ba\-lanced blowups of increasing size a fixed graph or a sequence of Erd\H{o}s-R\'enyi random graphs $G_{n,p}$, where $p$ is a constant and $n$ tends to infinity. 
By compactness, every sequence of graphs has a convergent subsequence~\cite[Theorem 3.2]{RazborovFlag2007}.
Denote by $\phi=\phi (H)$ the limits of $ p(H,G_n)$, which is a function $\mathcal{F} \to [0,1]$.
Razborov showed~\cite[Theorem 3.3]{RazborovFlag2007} that $\phi$ is a homomorphism from a certain algebra $\mathcal{A}$ to $\mathbb{R}$. Positive homomorphims from $\mathcal{A}$ to $\mathbb{R}$, denoted by  $Hom^+(\mathcal{A},\mathbb{R})$, are homomorphisms $\phi$, where $\mathcal{F} \to [0,1]$. 
All these homomorphisms are  limits of convergent sequences~\cite[Theorem 3.3]{RazborovFlag2007}.  

We use graphs for accessibility of the explanation. 
The graphs can be replaced by $k$-uniform hypergraphs, permutations, or other models. In general one needs to have vertices with relations of finite arity.
Different models result in different algebras $\mathcal{A}$.

If an expression using flags is valid for all $\phi \in Hom^+(\mathcal{A},\mathbb{R})$, then we omit writing $\phi$ to decrease clutter in the notation.

The algebra $\mathcal{A}$ is obtained from formal finite linear combinations of graphs in $\mathcal{F}$ by factoring $\mathcal{K} := \mathrm{span}(\{ F - \sum_{F'\in \mathcal{F}_{v(F)+1}}  p(F,F')F) \,:\, \forall F \in \mathcal{F}\}$. The expressions in $\mathcal{K}$ enforce in $\mathcal{A}$ identities such as
\[
\Fn202 = \frac{1}{3}\Fn30{1 1 2} + \frac{2}{3}\Fn30{1 2 2} + \Fn30{2 2 2}.
\]
The intuitive idea is that calculations happen with linear combinations of densities of small graphs in a very large graph. 
The product of $F_1,F_2 \in \mathcal{F}$ is defined as
\begin{align}
F_1 \times F_2 = \sum_{F \in \mathcal{F}_{v(F_1)+v(F_2)}} p(F_1,F_2; F)\cdot F,\label{eq:times}
\end{align}
where $p(F_1,F_2; F)$ is the probability that $F[X]\cong F_1$ and $F[V(F)\setminus X]\cong F_2$ for $X \subseteq V(F)$ with $|X| = v(F_1)$  which is chosen uniformly at random.
One could think of the left hand-side of \eqref{eq:times} as choosing uniformly independently at random in a large graph sets $X_1$ of $v(F_1)$ vertices and $X_2$ of $v(F_2)$ vertices and asking if $X_1$ induces a copy of $F_1$ and $X_2$ induces a copy of $F_2$. If the underlying graph is very large, then $X_1$ and $X_2$ are typically disjoint. The right hand-side of \eqref{eq:times} lists the options for $X_1 \cup X_2$. 
For $\mathcal{F}$ of simple graphs
\[
\Fuu2 \times \Fuu1 = \frac{1}{6}\left(
\Fn{4}{0}{1 1 2 1 1 1}+
2\Fn{4}{0}{1 1 2 1 1 2}+
3\Fn{4}{0}{1 1 2 1 2 2}+
3\Fn{4}{0}{1 2 2 1 1 2}+
\Fn{4}{0}{2 1 2 1 1 2}+
2\Fn{4}{0}{2 2 2 1 1 2}+
\Fn{4}{0}{2 2 2 1 2 2}
\right).
\]
For $\mathcal{A}$ of $3$-edge colored graphs
\[
\Fuu2 \times \Fuu3
=
\frac{1}{6}
\left(
\Fn{4}{0}{1 1 2 3 1 1}+
\Fn{4}{0}{1 1 2 3 1 2}+
\Fn{4}{0}{1 1 2 3 1 3}+
2\Fn{4}{0}{1 2 2 3 3 1}+
2\Fn{4}{0}{1 3 2 3 2 2}+
2\Fn{4}{0}{1 3 2 3 2 3}+
2\Fn{4}{0}{1 3 2 3 2 4}+
3\Fn{4}{0}{2 3 2 3 2 3}+
3\Fn{4}{0}{2 2 2 3 3 3}+
\ldots
\right).
\]
Extending \eqref{eq:times} linearly defines the product on $\mathcal{A}$.

Extremal graph theory arguments often include counting over a fixed substructure. Some examples for such counting are $\sum_{v\in V(G)}d(v)$ or $\sum_{uv\in E(G)} |N(u) \cap N(v)|$, where $G$ is a graph.
The entries in these sums have some fixed distinguished vertices. 
In flag algebras, this is modeled using graphs with $\ell \geq 0$ vertices labeled by $\{1,2,\ldots,\ell\}$. 
The graph on $\ell$ vertices, where each of the $\ell$ vertices are  labeled is called a \emph{type}. A type is usually denoted by $\sigma$ and the resulting algebra by $\mathcal{A}^\sigma$.
The main new properties of them are that the isomorphisms must preserve the labeling of the labeled vertices  
and in the definition of
product, the labeled vertices are shared.
More formally, let $(F_1,\theta_1)$ and $(F_2,\theta_2)$
be two labeled flags of the same type $\sigma$, where $F_i$ is an unlabeled graph and $\theta_i : [\ell] \hookrightarrow V[F_i]$ is an injective map indicating the labeled vertices for $i \in \{1,2\}$. 
Recall that $(F_i,\theta_i)$ being of type $\sigma$ means that $(F_i[\text{Im}(\theta_i)],\theta_i)$ is isomorphic to $\sigma$.
The product is defined as
\[
(F_1,\theta_1)\times (F_2,\theta_2)=\sum_{(F,\theta)\in\mathcal{F}^\sigma_{v(F_1)+v(F_2)-\ell}} p((F_1,\theta_1),(F_2,\theta_2);(F,\theta)) \cdot (F,\theta),
\]
where $p((F_1,\theta_1),(F_2,\theta_2);(F,\theta))$ is the probability that the set $X \subseteq V(F)\setminus \text{Im}(\theta)$ with $|X|=v(F_1)-\ell$ sampled uniformly at random satisfies that
$(F[X\cup \text{Im}(\theta)], \theta)$ is isomorphic to $(F_1,\theta_1)$
and $(F[ V(F)\setminus X], \theta)$ is  isomorphic to $(F_2,\theta_2)$.
In $3$-edge colored 
graphs\\
\[
\left(\Fn{3}{2}{2 3 4}\right)^2 = 
\Fn{4}{2}{2 3 3 4 4 1}+
\Fn{4}{2}{2 3 3 4 4 2}+
\Fn{4}{2}{2 3 3 4 4 3}+
\Fn{4}{2}{2 3 3 4 4 4}
\]
and
\[
\Fn{3}{2}{2 3 4} \times \Fn{3}{2}{2 4 3}  = 
\frac{1}{2}\Fn{4}{2}{2 3 4 4 3 1}+
\frac{1}{2}\Fn{4}{2}{2 3 4 4 3 2}+
\frac{1}{2}\Fn{4}{2}{2 3 4 4 3 3}+
\frac{1}{2}\Fn{4}{2}{2 3 4 4 3 4}.
\]
Expressions
$\frac{1}{v}\sum_{v\in V(G)}d(v)$ or $\frac{1}{|E(G)|}\sum_{uv\in E(G)} |N(u) \cap N(v)|$ for a graph $G$ are analogous to a linear unlabeling operator in flag algebras $\llbracket \cdot \rrbracket$.
Let $(F,\theta)$ be a labeled graph, where $F$ is an unlabeled graph and an injective map $\theta:[\ell] \to V(F)$ gives the labeling. Then,
\[
\llbracket (F,\theta) \rrbracket = c_F F,
\],
where $c_F$ is the probability that $(F,\theta) \cong (F,\theta')$ where
$\theta':[\ell] \to V(F)$ is an injective map chosen uniformly at random.
In other words, randomly label $\ell$ vertices of $F$ and ask if  the original labeled graph is obtained.
For example
\[
\left \llbracket \Flluu423324 \right \rrbracket = \frac 13 \Fuuuu423324 \quad\text{,}\quad  \left\llbracket \Fll2 \right\rrbracket = \Fuu2
\quad\text{and}\quad 
\left\llbracket\Flluu422331\right\rrbracket = \frac{1}{12} \Fuuuu422331.
\]
Notice that  the multiplication is defined only for graphs, where the subgraphs induced by the labeled vertices, i.e., the types, are the same.

With these definitions, there is an analogue of the Cauchy-Schwarz inequality \cite[Theorem 3.14]{RazborovFlag2007} that states for $f,g \in \mathcal{A}^{\sigma}$
\[
\llbracket f^2\rrbracket \cdot \llbracket g^2\rrbracket \geq  \llbracket fg\rrbracket^2.
\]
All calculations presented in this paper should formally be surrounded by $\phi( \cdots )$ that is a limit of a convergent sequence of graphs. 
The calculations would be on graph densities.
Since wrapping the calculations in $\phi( \cdot )$ adds notation but it is not useful for the calculation itself, they are usually  omitted. 
The calculations are valid for any choice of $\phi$ and can be intuitively thought of as calculations in large graphs.
Notice that the results from flag algebras are valid only in limits. 
When interpreting the calculations in large graphs, they come with $o(1)$ additive error. 
In particular, the calculations do not directly imply any meaningful thing if the convergent sequence consists of sparse graphs. In the flag algebras world, a sequence of sparse graphs is indistinguishable from graphs with no edges at all.\\

\end{document}

\begin{center}
\begin{tikzpicture}
\begin{axis}[ymin=0,ymax=1,enlargelimits=false,    
xmin=0,xmax=1,
xlabel={$e$},
    ylabel={$K_4$},
domain=0:1    
    ]
    \addplot [blue!80!black] coordinates {
( .0200000000,   4.0000134e-04  )
( .0400000000,   1.6000007e-03  )
( .0600000000,   3.6000010e-03  )
( .0800000000,   6.4000014e-03  )
( .1000000000,   1.0000001e-02  )
( .1200000000,   1.4400001e-02  )
( .1400000000,   1.9600001e-02  )
( .1600000000,   2.5600001e-02  )
( .1800000000,   3.2400001e-02  )
( .2000000000,   4.0000001e-02  )
( .2200000000,   4.8400002e-02  )
( .2400000000,   5.7600002e-02  )
( .2600000000,   6.7600001e-02  )
( .2800000000,   7.8400001e-02  )
( .3000000000,   9.0000001e-02  )
( .3200000000,   1.0240000e-01  )
( .3400000000,   1.1560000e-01  )
( .3600000000,   1.2960000e-01  )
( .3800000000,   1.4440000e-01  )
( .4000000000,   1.6000000e-01  )
( .4200000000,   1.7640000e-01  )
( .4400000000,   1.9360000e-01  )
( .4600000000,   2.1160000e-01  )
( .4800000000,   2.3040000e-01  )
( .5000000000,   2.5000000e-01  )
( .5200000000,   2.7040000e-01  )
( .5400000000,   2.9160000e-01  )
( .5600000000,   3.1360000e-01  )
( .5800000000,   3.3640000e-01  )
( .6000000000,   3.6000000e-01  )
( .6200000000,   3.8440000e-01  )
( .6400000000,   4.0960000e-01  )
( .6600000000,   4.3560000e-01  )
( .6800000000,   4.6240000e-01  )
( .7000000000,   4.9000000e-01  )
( .7200000000,   5.1840000e-01  )
( .7400000000,   5.4760000e-01  )
( .7600000000,   5.6345423e-01  )
( .7800000000,   5.6545236e-01  )
( .8000000000,   5.6752632e-01  )
( .8200000000,   5.6960401e-01  )
( .8400000000,   5.7167956e-01  )
( .8600000000,   5.7373249e-01  )
( .8800000000,   5.7573801e-01  )
( .9000000000,   5.7773217e-01  )
( .9200000000,   5.7971497e-01  )
( .9400000000,   5.8168636e-01  )
( .9600000000,   5.8350593e-01  )
( .9800000000,   5.8528052e-01  )
( 1.0000000000,   5.8753173e-01  )
    }
    ;
\addplot[red, thick] {x^2};  
\addlegendentry{$6K_4$ }
\addlegendentry{$e^2$}
    \end{axis}
\end{tikzpicture}
\end{center}

\sec